\title{Tangent cones to generalised theta divisors and generic injectivity of the theta map}
\author{George {H.} Hitching and Michael Hoff}
\newcommand{\C}{\mathbb{C}}
\newcommand{\PP}{\mathbb{P}}
\newcommand{\cO}{\mathcal{O}}
\newcommand{\tU}{\tilde{U}}
\newcommand{\cG}{\mathcal{G}}
\newcommand{\cU}{{\mathit U}}
\newcommand{\cT}{\mathcal{T}}
\newcommand{\cE}{\mathcal{E}}
\newcommand{\tB}{\tilde{B}}
\newcommand{\bB}{\overline{B}}
\newcommand{\tP}{\tilde{P}}
\newcommand{\End}{\mathrm{End}\,}
\newcommand{\Ker}{\mathrm{Ker}}
\newcommand{\Coker}{\mathrm{Coker}}
\newcommand{\tr}{\mathrm{tr}}
\newcommand{\Image}{\mathrm{Im}\,}
\newcommand{\Iden}{\mathrm{Id}}
\newcommand{\Hom}{\mathrm{Hom}}
\newcommand{\rank}{\mathrm{rk}}
\newcommand{\bp}{p_{1}}
\newcommand{\Oc}{{\mathcal O}_{C}}
\newcommand{\Kc}{K_{C}}
\newcommand{\Tc}{T_{C}}
\newcommand{\cUr}{U'_C (r, r(g-1))}
\newcommand{\SU}{\mathit{SU}}
\newcommand{\cD}{{\mathcal D}}
\newcommand{\Piczero}{\mathrm{Pic}^0(C)}
\newcommand{\isom}{\xrightarrow{\sim}}
\newcommand{\mult}{\mathrm{mult}}
\newcommand{\Pic}{\mathrm{Pic}}
\newcommand{\Gr}{\mathrm{Gr}}
\newtheorem{theorem}{{\textbf Theorem}}[section]
\newtheorem{proposition}[theorem]{{\textbf Proposition}}
\newtheorem{corollary}[theorem]{{\textbf Corollary}}
\newtheorem{lemma}[theorem]{{\textbf Lemma}}
\newtheorem{defn}[theorem]{{\textbf Definition}}
\newtheorem{remit}[theorem]{{\textbf Remark}}
\newenvironment{remark}{\begin{remit}\rm}{\end{remit}}
\newenvironment{definition}{\begin{defn}\rm}{\end{defn}}
\newcommand{\George}[1]{{{\footnotesize{\textcolor{blue}{GEORGE: #1}}}}}
\newcommand\blankfootnote[1]{%
  \let\thefootnote\relax\footnotetext{#1}%
  \let\thefootnote\svthefootnote%
}
\let\svfootnote\footnote
\renewcommand\footnote[2][?]{%
  \if\relax#1\relax%
    \blankfootnote{#2}%
  \else%
    \if?#1\svfootnote{#2}\else\svfootnote[#1]{#2}\fi%
  \fi
}
\begin{document}

\maketitle

\footnote[]{2010 \emph{Mathematics subject classification:} 14H60, 14H51, 14M12, 14B10, 13D10, 14C34}
\footnote[]{\emph{Key words and phrases:} Vector bundles, Generalised theta divisors, Brill--Noether theory.}

\begin{abstract} Let $C$ be a Petri general curve of genus $g$ and $E$ a general stable vector bundle of rank $r$ and slope $g-1$ over $C$ with $h^0 (C, E) = r+1$. For $g \ge (2r+2)(2r+1)$, we show how the bundle $E$ can be recovered from the tangent cone to the theta divisor $\Theta_E$ at $\Oc$. We use this to give a constructive proof and a sharpening of Brivio and Verra's theorem that the theta map $\SU_C (r) \dashrightarrow |r \Theta|$ is generically injective for large values of $g$. \end{abstract}

\section{Introduction}

Let $C$ be a nonhyperelliptic curve of genus $g$ and $L \in \Pic^{g-1}(C)$ a line bundle with $h^0 (C, L) = 2$ corresponding to a general double point of the Riemann theta divisor $\Theta$. It is well known that the projectivised tangent cone to $\Theta$ at $L$ is a quadric hypersurface $R_L$ of rank $4$ in the canonical space $|\Kc|^*$, which contains the canonically embedded curve.

Quadrics arising from tangent cones in this way have been much studied: Green \cite{G} showed that the $R_L$ span the space of all quadrics in $|\Kc|^*$ containing $C$; and both Kempf and Schreyer \cite{KS} and Ciliberto and Sernesi \cite{CS} have used the quadrics $R_L$ in various ways to give new proofs of Torelli's theorem.

In another direction: Via the Riemann--Kempf singularity theorem \cite{K73}, one sees that the rulings on $R_L$ cut out the linear series $|L|$ and $|\Kc L^{-1}|$ on the canonical curve. Thus the data of the tangent cone and the canonical curve allows one to reconstruct the line bundle $L$. In this article we study a related construction for vector bundles of higher rank.

Let $V \to C$ be a semistable vector bundle of rank $r$ and integral slope $h$. We consider the set 
\begin{equation} \label{ThetaVsupport}
\left\lbrace M \in \Pic^{g-1-h}(C)\ \colon\ h^0 (C, V \otimes M) \ge 1 \right\rbrace .
\end{equation}
It is by now well known that for general $V$, this is the support of a divisor $\Theta_V$ algebraically equivalent to a translate of $r \cdot \Theta$. If $V$ has trivial determinant, then in fact $\Theta_V$, when it exists, belongs to $|r \Theta|$. 
%

For general $V$, the projectivised tangent cone $\cT_M (\Theta_V)$ to $\Theta_V$ at a point $M$ of multiplicity $r+1$ is a determinantal hypersurface of degree $r+1$ in $|\Kc|^*$ (see for example Casalaina Martin--Teixidor i Bigas \cite{CT}). Our first main result (\S \ref{reconstruction}) is a construction which from $\cT_M (\Theta_V)$ recovers the bundle $V \otimes M$, up to the involution $V \otimes M \mapsto \Kc \otimes M^{-1} \otimes V^*$. This is valid whenever $V \otimes M$ and $\Kc \otimes M^{-1} \otimes V^*$ are globally generated.

We apply this construction to give an improvement of a result of Brivio and Verra \cite{BV}. To describe this application, we need to recall some more objects. Write $\SU_C(r)$ for the moduli space of semistable bundles of rank $r$ and trivial determinant over $C$. The association $V \mapsto \Theta_V$ defines a map
\begin{equation} 
\cD \colon \SU_C (r) \ \dashrightarrow \ | r\Theta | \ = \ \PP^{r^g - 1} , \label{thetamap}
\end{equation}
called the \textsl{theta map}. Drezet and Narasimhan \cite{DN} showed that the line bundle associated to the theta map is the ample generator of the Picard group of $\SU_C(r)$. Moreover, the indeterminacy locus of $\cD$ consists of those bundles $V \in \SU_C(r)$ for which (\ref{ThetaVsupport}) is the whole Picard variety. This has been much studied; see for example Pauly \cite{Pau10}, Popa \cite{Popa99} and Raynaud \cite{Ray82}.

Brivio and Verra \cite{BV} showed that $\cD$ is generically injective for a general curve of genus $g \ge \binom{3r}{r} - 2r - 1$, partially answering a conjecture of Beauville \cite[\S 6]{B06}. 
 We apply the aforementioned construction to give the following sharpening of Brivio and Verra's result:

\begin{theorem} For $r \ge 2$ and $C$ a Petri general curve of genus $g \ge (2r+2)(2r+1)$, the theta map (\ref{thetamap}) is generically injective. \label{thetageninj} \end{theorem}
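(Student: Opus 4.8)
The plan is to construct an explicit rational retraction of the theta map. Generic injectivity will follow once I produce a nonempty (hence dense) open subset $U\subset\SU_C(r)$ together with a rational map $\rho\colon|r\Theta|\dashrightarrow\SU_C(r)$ satisfying $\rho\circ\cD=\mathrm{id}$ on $U$. Indeed, this makes $\cD|_U$ injective, hence $\cD$ generically finite onto its image; and since $\cD\big(\SU_C(r)\setminus U\big)$ then has strictly smaller dimension than $\overline{\cD(U)}=\overline{\cD(\SU_C(r))}$, a general point of the image avoids it, so the general fibre of $\cD$ is the single point singled out by $\rho$. In particular there is no need to analyse an arbitrary partner $W$ in a fibre: everything reduces to recovering a \emph{good} bundle $V$ from its own theta divisor.

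To build $\rho$, I start from a divisor $\Theta_V$ with $V\in U$ and recover $V$ from its tangent cones. Using the cited identification of $\mult_M\Theta_V$ with $h^0(C,V\otimes M)$, I would locate a point $M\in\Pic^{g-1}(C)$ of multiplicity exactly $r+1$ at which both $V\otimes M$ and $\Kc\otimes M^{-1}\otimes V^*$ are globally generated. Since $V$ has slope $0$, these twists have slope $g-1$, which is precisely the range in which the reconstruction of \S\ref{reconstruction} operates; applied to $\cT_M(\Theta_V)$ it reads off the unordered pair $\{V\otimes M,\ \Kc M^{-1}V^*\}$. Tensoring by $M^{-1}$ produces the pair
\[
\{\,V,\ \Kc\otimes M^{-2}\otimes V^*\,\}.
\]

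To pick $V$ out of this pair I would use determinants. As $\det V=\Oc$ while $\det(\Kc M^{-2}V^*)=\Kc^{\,r}\otimes M^{-2r}$, the two members are distinguished by their determinants as soon as $M^{2r}\not\cong\Kc^{\,r}$, equivalently as soon as $M^{2}\otimes\Kc^{-1}$ is \emph{not} an $r$-torsion point of $\Piczero$; the excluded $M$ form a finite set. Thus, provided the good point $M$ may be chosen off this finite set, $\rho(\Theta_V)$ is defined to be the unique member of the pair with trivial determinant, namely $V$. This choice is manifestly independent of $M$, so $\rho$ is well-defined on the image and $\rho\circ\cD=\mathrm{id}$ on $U$.

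The crux, and the step I expect to be the main obstacle, is to show that $U$ is nonempty: that a general $V\in\SU_C(r)$ admits such a good point, and in fact a positive-dimensional family of them, so that one lies off the finite torsion locus above. Equivalently, the determinantal locus $\{M\in\Pic^{g-1}(C)\colon h^0(C,V\otimes M)\ge r+1\}$, of expected codimension $(r+1)^2$ and hence expected dimension $g-(r+1)^2$, should possess a component of the expected dimension whose general member carries \emph{exactly} $r+1$ sections — so that the multiplicity is exactly $r+1$ and the tangent cone has degree $r+1$ — and along which $V\otimes M$ and $\Kc M^{-1}V^*$ are globally generated. I expect the Brill--Noether analysis of the twisted family $\{V\otimes M\}_M$, simultaneously guaranteeing nonemptiness, the exact section count, and the two global-generation conditions, to be where the hypothesis $g\ge(2r+2)(2r+1)$ is consumed: the bound is far stronger than the $g>(r+1)^2$ needed merely for positive dimension, the surplus paying for the delicate requirement that slope-$(g-1)$ twists be globally generated along the locus. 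Granting this existence statement, the three preceding steps assemble $\rho$ and complete the proof.
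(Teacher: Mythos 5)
Your skeleton is the paper's own: find $M\in\Theta_V$ of multiplicity exactly $r+1$ with $V\otimes M$ and $\Kc\otimes M^{-1}\otimes V^*$ globally generated, recover the pair $\{V\otimes M,\ \Kc\otimes M^{-1}\otimes V^*\}$ from the tangent cone, and single out $V$ as the member with trivial determinant after arranging $M^{2r}\not\cong\Kc^r$ (the paper does this by noting the good locus of $M$ has dimension $g-(r+1)^2\ge 1$; your finiteness observation is the same point, and the retraction packaging is harmless). But there is a genuine gap in the middle step. You treat ``the reconstruction of \S\ref{reconstruction}'' as a map from the \emph{hypersurface} $\cT_M(\Theta_V)$ to the pair of bundles. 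That section's input is not the hypersurface: it is the specific matrix $\Lambda=(\bar{\mu}(s_i\otimes t_j))$ of linear forms coming from the cup product, and it recovers $\Image(\Lambda|_C)\cong \Tc\otimes E$ and $\Image(\Lambda^t|_C)\cong E^*$. The divisor $\Theta_V$ only hands you a degree-$(r+1)$ hypersurface, so you must additionally prove that \emph{every} determinantal representation of it is equivalent to $\Lambda$ or $\Lambda^t$. This is Frobenius's theorem (Proposition \ref{complexes} and Corollary \ref{uniqueDetRepresentation}), and it needs the entries of $\Lambda$ to be independent linear forms with no relations among minors, i.e.\ it needs $V\otimes M$ to be \emph{Petri trace injective} --- a condition entirely absent from your list of requirements on $M$, and also the condition under which $\mult_M\Theta_V=h^0(C,V\otimes M)$ is actually established (Proposition \ref{ptithetagood}). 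Without it, the unordered pair is simply not determined by $\Theta_V$.

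The second gap is the one you flag yourself: the existence of the good point $M$ is assumed, not proved. This is Theorem \ref{MExists}, and its proof is the bulk of the paper (\S\ref{genThetaDiv}): one builds stable $E$ of slope $g-1$ with $h^0(C,E)=r+1$, globally generated and Petri trace injective, as extensions $0\to F\to E\to N\to 0$ whose Petri trace map is controlled by that of the line bundle $N$ (this is where $g\ge(2r+2)(2r+1)$ is consumed, to supply $N\in\Pic^g(C)$ with $h^0(C,N)=2r+2$ and $|N|$ base point free, and a generated $F$); one then uses coherent systems to identify an irreducible component $B\subseteq B^{r+1}_{r,r(g-1)}$ whose general member has all these properties, and finally shows by a tangent-space dimension count that $B$ dominates $\SU_C(r)$ under $(V,M)\mapsto V\otimes M$. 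So your proposal is a correct outline of the paper's strategy, but the two omitted ingredients --- the Frobenius uniqueness of the determinantal representation together with the Petri trace injectivity that powers it, and the existence theorem --- are precisely where the mathematical content lies.
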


In addition to giving the statement for several new values of $g$ when $r \ge 3$ (our lower bound for $g$ depends quadratically on $r$ rather than exponentially), our proof is constructive, based on the method mentioned above for explicitly recovering the bundle $V$ from the tangent cone to the theta divisor at a point of multiplicity $r+1$. This gives a new example, in the context of vector bundles, of the principle apparent in \cite{KS} and \cite{CS} that the geometry of a theta divisor at a sufficiently singular point can encode essentially all the information of the bundle and/or the curve.

Our method works for $r = 2$, but in this case much more is already known: Narasimhan and Ramanan \cite{NR69} showed, for $g = 2$ and $r = 2$, that $\cD$ is an isomorphism $\SU_C(2) \isom \PP^3$, and van Geemen and Izadi \cite{vGI} generalised this statement to nonhyperelliptic curves of higher genus. Note that our proof of Theorem \ref{thetageninj} is not valid for hyperelliptic curves (see Remark \ref{failure}).

Here is a more detailed summary of the article. In \S \ref{genThetaDiv}, we study semistable bundles $E$ of slope $g-1$ for which the \textsl{Petri trace map}
\[ \bar{\mu} \colon H^0 (C, E) \otimes H^0 (C, \Kc \otimes E^*) \to H^0 (C, \Kc) \]
is injective. A bundle $E$ with this property will be called \textsl{Petri trace injective}. We prove that for large enough genus, the theta divisor of a generic $V \in \SU_C(r)$ contains a point $M$ of multiplicity $r+1$ such that $V \otimes M$ and $\Kc \otimes M^{-1} \otimes V^*$ are Petri trace injective and globally generated.

Suppose now that $E$ is a vector bundle of slope $g-1$ with $h^0 (C, E) \ge 1$. If $\Theta_E$ is defined and $\mult_{\Oc} ( \Theta_E ) = h^0 (C, E)$, then the tangent cone to $\Theta_E$ at $\Oc$ is a determinantal hypersurface in $|\Kc|^* = \PP^{g-1}$ containing the canonical embedding of $C$. We prove (Proposition \ref{recovery} and Corollary \ref{transpose}) that if $C$ is a general curve of genus $g \ge (2r+2)(2r+1)$, and $E$ a globally generated Petri trace injective bundle of rank $r$ and slope $g-1$ with $h^0 (C, E) = r+1$, then the bundle $E$ can be reconstructed up to the involution $E \mapsto \Kc \otimes E^*$ from a certain determinantal representation of the tangent cone to $\Theta_E$ at $\Oc$. By a classical result of Frobenius (whose proof we sketch in Proposition \ref{complexes}), any two such representations are equivalent up to transpose. The generic injectivity of the theta map for a Petri general curve (Theorem \ref{Thetageninj}) can then be deduced by combining these facts and the statement in \S \ref{genThetaDiv} that the theta divisor of a general $V \in \SU_C(r)$ contains a suitable point of multiplicity $r+1$.

We assume throughout that the ground field is $\C$. The reconstruction of $E$ from its tangent cone in \S \ref{reconstruction}
is valid for an algebraically closed field of characteristic zero or $p > 0$ not dividing $r+1$.

\paragraph*{Acknowledgements} 
The first named author gratefully acknowledges financial support from H\o gskolen i Oslo Oppstarts- og Akkvisisjonsmidler 2015. He also thanks the Universit\"at des Saarlandes for hospitality. The second named author was partially supported by the DFG-grant SPP 1489 Schr{.} 307/5-2. The authors would like to thank Ciro Ciliberto, Peter Newstead, Christian Pauly, Frank-Olaf Schreyer and Eduardo Sernesi for helpful discussions and comments on this work. 

\section{Singularities of theta divisors of vector bundles} \label{genThetaDiv}

\subsection{Petri trace injective bundles}


Let $C$ be a projective smooth curve of genus $g \ge 2$. Let $V \to C$ be a stable vector bundle of rank $r \ge 2$ and integral slope $h$, and consider the locus
\begin{equation}
\left\lbrace M \in \Pic^{g-1-h}(C) : h^0 (C, V \otimes M) \ge 1 \right\rbrace .
\label{thetadiv}
\end{equation}
If this is not the whole of $\Pic^{g-1-h}(C)$, then it is the support of the theta divisor $\Theta_V$. 

The theta divisor of a vector bundle is a special case of a \textsl{twisted Brill--Noether locus}
\begin{equation} B_{1, g-1-h}^n(V) \ := \ \left\lbrace M \in \Pic^{g-1-h}(C) : h^0 (C, V \otimes M) \ge n \right\rbrace . \label{tBNl} \end{equation}
The following is central in the study of these loci (see for example Teixidor i Bigas \cite[\S 1]{TiB}): For $E \to C$ a stable vector bundle, we consider the \textsl{Petri trace map}:
\begin{equation}
\bar{\mu} \colon H^0(C, E)\otimes H^0(C,\Kc\otimes E^*) \stackrel{\mu}{\to} H^0(C,\Kc\otimes \End E) \stackrel{\tr}{\to} H^0(C,\Kc) .
\label{barmu} \end{equation}
Then for $E = V \otimes M$ and $M\in B^n_{1, g-1-h}(V)\backslash B^{n+1}_{1, g-1-h}(V)$, the Zariski tangent space to $B^n_{1, g-1-h}(V)$ at $M$ is exactly $\Image \left( \bar{\mu} \right)^{\perp}$. 
 This motivates a definition:

\begin{definition} Suppose $E \to C$ is a vector bundle with $h^0 (C, E) = n \ge 1$. If the map $\mu$ above is injective, we will say that $E$ is \textsl{Petri injective}. If the composed map $\bar{\mu}$ is injective, we will say that $E$ is \textsl{Petri trace injective}.

\end{definition}

\begin{remark} \label{ptiproperties} \hspace{2em} \begin{enumerate}
\item [(1)] Clearly, a Petri trace injective bundle is Petri injective. For line bundles, the two notions coincide.
\item [(2)] Suppose $V \in \mathit{U}_C(r, d)$ where $\mathit{U}_C(r, d)$ is the moduli space of semistable rank $r$ vector bundles of degree $d$. 
If $E = V \otimes M$ is Petri trace injective for $M \in \Pic^e (C)$, then $B^n_{1,e}(V)$ is smooth at $M$ and of the expected dimension
\[ h^1 (C, \Oc) - h^0 (C, V \otimes M) \cdot h^1 (C, V \otimes M) . \]
\item [(3)] We will also need to refer to the usual generalised Brill--Noether locus 
\[ B^n_{r, d} \ = \ \left\lbrace E \in \mathit{U}_C (r, d) : h^0 (C, E) \ge n \right\rbrace . \]
If $E$ is Petri injective then this is smooth and of the expected dimension
\[ h^1 (C, \End E) - h^0 (C, E) \cdot h^1 (C, E) \]
at $E$. See for example Grzegorczyk and Teixidor i Bigas \cite[\S 2]{GTiB}. 
\item [(4)] Petri injectivity and Petri trace injectivity are open conditions on families of bundles ${\mathcal E} \to C \times B$ with $h^0 (C, {\mathcal E}_b)$ constant. Later, we will discuss the sense in which these properties are ``open'' when $h^0 (C, {\mathcal E}_b)$ may vary.

\end{enumerate} \end{remark}

\noindent We will also need the notion of a Petri general curve:

\begin{definition} A curve $C$ is called \textsl{Petri general} if every line bundle on $C$ is Petri injective. \end{definition}

By \cite{Gie82}, the locus of curves which are not Petri general is a proper subset of the moduli space $M_g$ of curves of genus $g$, the so called \textsl{Gieseker--Petri locus}. The hyperelliptic locus is contained in the Gieseker--Petri locus. Apart from this, in general not much is known about the components of the Gieseker--Petri locus and their dimensions. For an overview of known results, we refer to \cite{TiB88}, \cite{Far05} and \cite{BS11} and the references cited there. 

\begin{proposition} Suppose $V$ is a stable bundle of rank $r$ and integral slope $h$. Suppose $M_0 \in \Pic^{g-1-h}(C)$ satisfies $h^0 (C, V \otimes M_0) \ge 1$, and furthermore that $V \otimes M_0$ is Petri trace injective. Then the theta divisor $\Theta_V \subset \Pic^{g-1-h}(C)$ is defined. Furthermore, we have equality $\mult_{M_0} \Theta_V = h^0 (C, V \otimes M_0)$. \label{ptithetagood} \end{proposition}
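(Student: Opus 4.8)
The plan is to realise $\Theta_V$ near $M_0$ as the vanishing locus of a determinant, and to read off both its existence and its multiplicity from the lowest-order term of that determinant, which will turn out to be governed by the Petri trace map. First I would recall the standard determinantal description. Let $p_C$ and $\pi$ be the projections from $C \times \Pic^{g-1-h}(C)$ to $C$ and to $\Pic^{g-1-h}(C)$, fix a Poincar\'e bundle $\mathcal{P}$, and set $\mathcal{V} := p_C^* V \otimes \mathcal{P}$. Choosing an effective divisor $D$ of large degree on $C$ and pushing forward the sequence $0 \to \mathcal{V} \to \mathcal{V}(D) \to \mathcal{V}(D)|_{D \times \Pic^{g-1-h}(C)} \to 0$ produces a morphism $\gamma \colon \mathcal{F}_0 \to \mathcal{F}_1$ of vector bundles of the \emph{same} rank $N$ on $\Pic^{g-1-h}(C)$ (equal ranks because $\chi(V \otimes M) = 0$) whose fibrewise kernel and cokernel compute $H^0(C, V \otimes M)$ and $H^1(C, V \otimes M)$. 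By construction the locus (\ref{thetadiv}) is the degeneracy locus of $\gamma$; hence $\Theta_V$ is defined precisely when $\det\gamma \in H^0(\det\mathcal{F}_1 \otimes (\det\mathcal{F}_0)^{-1})$ is not identically zero, and in that case $\Theta_V = \{\det\gamma = 0\}$ with its natural scheme structure.

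I would then localise at $M_0$. Put $n := h^0(C, V \otimes M_0)$, so that $\gamma(M_0)$ has kernel and cokernel of dimension $n$. Choosing local trivialisations and compatible splittings near $M_0$, I may write $\gamma = \left( \begin{smallmatrix} P & Q \\ R & T \end{smallmatrix} \right)$ with $P$ an $n \times n$ block, $P(M_0) = Q(M_0) = R(M_0) = 0$, and $T$ invertible near $M_0$. Then $\det\gamma = \det T \cdot \det S$ with $S := P - Q T^{-1} R$ the Schur complement, and $\mult_{M_0}(\det\gamma) = \mathrm{ord}_{M_0}(\det S)$ since $\det T(M_0) \neq 0$. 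As $S(M_0) = 0$, every entry of $S$ vanishes at $M_0$, so $\mathrm{ord}_{M_0}(\det S) \geq n$; moreover, since $Q$ and $R$ vanish at $M_0$ the correction $Q T^{-1} R$ is of order $\geq 2$, whence the differential $dS_{M_0} = dP_{M_0}$ and the homogeneous degree-$n$ part of $\det S$ is exactly $\det(dP_{M_0})$, a (possibly zero) degree-$n$ form on $T_{M_0}\Pic^{g-1-h}(C)$.

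The crux is to identify this differential. By the infinitesimal analysis of twisted Brill--Noether loci recalled above (following Teixidor i Bigas), the map $dP_{M_0} \colon T_{M_0}\Pic^{g-1-h}(C) = H^1(C, \Oc) \to \Hom\big(H^0(C, V \otimes M_0),\, H^1(C, V \otimes M_0)\big)$ is the cup-product map; under the Serre-duality identifications $H^1(C, \Oc) = H^0(C, \Kc)^*$ and $H^1(C, V \otimes M_0) = H^0(C, \Kc \otimes M_0^{-1} \otimes V^*)^*$ it is precisely the transpose $\bar{\mu}^*$ of the Petri trace map of $E = V \otimes M_0$ (consistently, its kernel $\Image(\bar{\mu})^\perp$ is the recorded Zariski tangent space). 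Now I invoke the hypothesis: Petri trace injectivity means $\bar{\mu}$ is injective, hence $\bar{\mu}^* = dP_{M_0}$ is \emph{surjective} onto $\Hom\big(H^0(C, V \otimes M_0), H^1(C, V \otimes M_0)\big)$. Both spaces having dimension $n$, this $\Hom$-space contains isomorphisms, so $dP_{M_0}(v)$ is an isomorphism for some $v$, i.e. $\det(dP_{M_0})$ is not identically zero. Therefore $\mathrm{ord}_{M_0}(\det S) = n$; in particular $\det S \not\equiv 0$, so $\det\gamma \not\equiv 0$ and $\Theta_V$ is defined, while $\mult_{M_0}\Theta_V = \mathrm{ord}_{M_0}(\det\gamma) = n = h^0(C, V \otimes M_0)$, as claimed.

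I expect the main obstacle to be the bookkeeping in the third step: verifying through the Serre-duality and base-change identifications that $dP_{M_0}$ really is $\bar{\mu}^*$ (rather than some twist of it), and checking that $\mathrm{ord}_{M_0}\det\gamma$ is independent of the local trivialisations used to form the Schur complement. Both points are standard, and once $\det(dP_{M_0}) \not\equiv 0$ is established the conclusion is purely formal.
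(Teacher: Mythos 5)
Your proposal is correct, and it reaches the conclusion by a route that is the same in spirit but more self-contained than the paper's. The key input is identical in both arguments: Serre duality turns Petri trace injectivity of $E = V \otimes M_0$ into surjectivity of the cup product $H^1(C,\Oc) \to \Hom(H^0(C,E), H^1(C,E))$, and since $\chi(E)=0$ forces $h^0 = h^1$, surjectivity produces a tangent direction $b$ along which $\cdot \cup b$ is an isomorphism. The paper uses this $b$ only to conclude that the corresponding first-order deformation kills all sections, so that the locus (\ref{thetadiv}) is proper and $\Theta_V$ exists; it then \emph{cites} Casalaina-Martin and Teixidor i Bigas \cite[Proposition 4.1]{CT} for the equality $\mult_{M_0}\Theta_V = h^0(C,E)$. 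You instead reprove that multiplicity statement from scratch: the determinantal presentation $\gamma \colon \mathcal{F}_0 \to \mathcal{F}_1$, the Schur-complement reduction to the $n \times n$ block $S$ with $\mathrm{ord}_{M_0}\det S \ge n$ and leading term $\det(dP_{M_0})$, and the identification of $dP_{M_0}$ with the cup product. This buys a proof independent of \cite{CT} (it is essentially the Riemann--Kempf argument in the twisted vector-bundle setting, which is also how \cite{CT} proceeds), at the cost of the base-change and Serre-duality bookkeeping you flag at the end; that identification of $dP_{M_0}$ with $\bar{\mu}^*$ is genuinely the content of the cited result and is standard. Both the existence and the multiplicity claims follow correctly from your argument, with existence falling out as the special case $\det(dP_{M_0}) \not\equiv 0 \Rightarrow \det\gamma \not\equiv 0$ rather than being argued separately via a deformation as in the paper.
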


\begin{proof} Write $E := V \otimes M_0$. It is well known that via Serre duality, $\bar{\mu}$ is dual to the cup product map
\[ \cup \colon H^1 (C, \Oc) \to \Hom \left( H^0 (C, E) , H^1 (C, E) \right) . \]
By hypothesis, therefore, $\cup$ is surjective. Since $E$ has Euler characteristic zero, $h^0 (C, E) = h^1 (C, E)$. Hence there exists $b \in H^1 (C, \Oc)$ such that $\cdot \cup b \colon H^0 (C, E) \to H^1 (C, E)$ is injective. The tangent vector $b$ induces a deformation of $M_0$ and hence of $E$, which does not preserve any nonzero section of $E$. Therefore, the locus
\[ \left\lbrace M \in \Pic^{g-1-h}(C) : h^0 (C, V \otimes M) \ge 1 \right\rbrace \]
is a proper sublocus of $\Pic^{g-1-h} (C)$, so $\Theta_V$ is defined. Now we can apply Casalaina--Martin and Teixidor i Bigas \cite[Proposition 4.1]{CT}, to obtain the desired equality $\mult_{M_0} \Theta_V = h^0 (C, V \otimes M_0)$. \end{proof}

\subsection{Existence of good singular points}

In this section, we study global generatedness and Petri trace injectivity of the bundles $V \otimes M$ for $M \in B^{r+1}_{1, g-1}(V)$ for general $C$ and $V$. The main result of this section is:

\begin{theorem} Suppose $C$ is a Petri general curve of genus $g \ge (2r+2)(2r+1)$ and $V \in \SU_C(r)$ a general bundle. Then there exists $M \in \Theta_V$ such that $h^0 (C, V \otimes M) = r+1$, and both $V \otimes M$ and $\Kc \otimes M^{-1} \otimes V^*$ are globally generated and Petri trace injective. \label{MExists} \end{theorem}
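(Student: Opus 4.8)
The plan is to set $E = V \otimes M$, a bundle of rank $r$, slope $g-1$ and Euler characteristic $0$, and to produce $M$ with $h^0(C,E) = r+1$ carrying the three open properties. I would begin with two reductions. First, by Remark \ref{ptiproperties}(4) the conditions ``$h^0 = r+1$'', global generation and Petri trace injectivity are all open in families with constant $h^0$, and by Proposition \ref{ptithetagood} Petri trace injectivity of $E$ forces $\mult_M \Theta_V = r+1$ and makes $B^{r+1}_{1,g-1}(V)$ smooth of expected dimension $g - (r+1)^2 \ge 0$ at $M$. Hence it suffices to exhibit one pair $(V_0, M_0)$ with all the required properties at which the incidence variety $\{(V,M) : h^0(V\otimes M) \ge r+1\}$ is smooth of the expected dimension $\dim \SU_C(r) + (g - (r+1)^2)$: the fibre over $V_0$ then has dimension $g-(r+1)^2$, so the projection to $\SU_C(r)$ is dominant and the generic $V$ admits such an $M$. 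Second, writing $E' = \Kc \otimes M^{-1} \otimes V^* = \Kc \otimes E^*$, the canonical identification $\End E' \cong \End E$ preserves the trace, so $\bar{\mu}_{E'}(t \otimes s) = \bar{\mu}_E(s \otimes t)$; the two Petri trace maps are thus one bilinear map up to interchanging the factors, have the same kernel, and Petri trace injectivity of $E$ and of $E'$ are equivalent. It is therefore enough to treat $E$.

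For the existence of $M$ with $h^0(C, V\otimes M) = r+1$ exactly, I would invoke twisted Brill--Noether existence for general $V$: the expected dimension of $B^{r+1}_{1,g-1}(V)$ is $g - (r+1)^2$, which is nonnegative in our range, while $B^{r+2}_{1,g-1}(V)$ has the strictly smaller expected dimension $g - (r+2)^2$, so a general point of a component of the former has $h^0 = r+1$. The large genus bound supplies the room needed for these loci to be nonempty and of expected dimension for general $V$, and, combined with the Petri trace injectivity below, for the smoothness statement used in the first reduction.

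Global generation of $E$ and of $E'$ I would handle by a degeneracy-locus count. Since $h^0(E) = r+1$ and $\rank E = r$, failure of global generation at $p$ means the evaluation morphism $H^0(E) \otimes \cO_C \to E$, from a rank $r+1$ trivial bundle to a rank $r$ bundle, drops rank at $p$; the locus where a general such morphism is not surjective has expected codimension $(r+1-(r-1))(r-(r-1)) = 2$ in $C$, hence is empty on the curve, and the $M$ for which it is nonempty sweep out a locus of dimension strictly below $\dim B^{r+1}_{1,g-1}(V)$. The identical count applies to $E'$, whose evaluation is governed by $h^0(E') = h^1(E) = r+1$. The only delicate point, again absorbed by the genus bound, is to ensure that for general $(V,M)$ these evaluation maps are general enough for the expected-codimension count to be valid.

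The main obstacle is Petri trace injectivity of $E$. As noted in the proof of Proposition \ref{ptithetagood}, $\bar{\mu}_E$ is injective if and only if the cup-product map $H^1(C,\cO) \to \Hom(H^0(E), H^1(E))$ is surjective, a map from a $g$-dimensional space onto an $(r+1)^2$-dimensional one. I would stress that the naive degeneration of $E$ to a direct sum $\bigoplus_i L_i$ of line bundles is worthless here: the trace annihilates every off-diagonal $\Hom$-component, so $\bar{\mu}$ acquires a large kernel and a split bundle is never Petri trace injective for $r \ge 2$. A genuine argument must use that $E$ is indecomposable and globally generated. I would attempt to exploit the global generation of $E' = \Kc \otimes E^*$ through a higher-rank analogue of the base-point-free pencil trick, converting surjectivity of the cup product into a vanishing statement, and then reduce that statement to the Petri injectivity of line bundles on $C$, which is available because $C$ is Petri general. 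It is precisely here that I expect the quadratic bound $g \ge (2r+2)(2r+1)$ to be forced: it is what makes the cohomological dimension estimates close, simultaneously yielding nonemptiness and expected dimension of the twisted Brill--Noether loci, emptiness of the degeneracy loci obstructing global generation, and surjectivity of the cup product giving Petri trace injectivity.
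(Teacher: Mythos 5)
Your reductions are sound and match the paper's architecture: the dominance of the projection from the incidence variety $\{(V,M): h^0(C,V \otimes M) \ge r+1\}$ to $\SU_C(r)$ is exactly how the paper concludes (via the \'etale tensor map $\SU_C(r) \times \Pic^{g-1}(C) \to \cU_C(r,r(g-1))$ and the count $\dim \bB - (g-(r+1)^2) = \dim \SU_C(r)$), and your observation that $\bar{\mu}_E$ and $\bar{\mu}_{\Kc \otimes E^*}$ coincide up to swapping the tensor factors is correct and implicitly used. You also correctly diagnose that degenerating to a split bundle is useless because the trace kills the off-diagonal blocks, and that the proof must ultimately reduce to Petri injectivity of a line bundle on the Petri general curve $C$.

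However, the heart of the theorem --- actually producing one Petri trace injective, globally generated $E$ of slope $g-1$ with $h^0(C,E)=r+1$ --- is precisely the step you leave as ``I would attempt to exploit \dots a higher-rank analogue of the base-point-free pencil trick'', and no such argument is supplied; this is a genuine gap, not a routine verification. Moreover the nonemptiness and expected-dimension statements for $B^{r+1}_{1,g-1}(V)$ and $B^{r+2}_{1,g-1}(V)$ that you invoke to get $h^0 = r+1$ exactly are not available for general $V$ independently of this construction: they are consequences of the Petri trace injectivity you are trying to prove, so that paragraph is circular as written. The paper's solution is to build $E$ as a general extension $0 \to F \to E \to N \to 0$ with $F$ semistable of rank $r-1$ and degree $(r-1)(g-1)-1$, generated with $h^0(C,F)=r$, and $N \in \Pic^g(C)$ base-point-free with $h^0(C,N)=2r+2$, arranged (Lemma \ref{CoboundarySurjective}) so that an $(r+1)$-dimensional space $\Pi$ of sections of $E$ lifts from $H^0(C,N)$ while $H^0(C,\Kc \otimes E^*) \cong H^0(C,\Kc \otimes N^{-1})$; then each $\mu(\sigma_i \otimes \tau_j)$ is a rank-one twisted endomorphism factoring through $N^{-1}$, its trace is identified with $\mu_N(\widetilde{\sigma_i} \otimes \widetilde{\tau_j})$, and injectivity follows from Petri generality applied to the single line bundle $N$. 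This also explains the bound $g \ge (2r+2)(2r+1)$: it is exactly the Brill--Noether condition $\rho = g - (2r+2)(2r+1) \ge 0$ needed for such an $N$ to exist, not a cup-product dimension estimate. Finally, global generation of $E$ and of $\Kc \otimes E^*$ is obtained in the paper from the coherent-systems results of \cite{BBN} together with the observation that the involution $E \mapsto \Kc \otimes E^*$ preserves the relevant component of $B^{r+1}_{r,r(g-1)}$, rather than from a degeneracy-locus count on the evaluation map, which (as you concede) is not a general morphism of bundles and so does not obey the expected-codimension heuristic without further argument.
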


The proof of this theorem has several ingredients. We begin by constructing a stable bundle $E_0$ with some of the properties we are interested in. Let $F$ be a semistable bundle of rank $r-1$ and degree $(r-1)(g-1) - 1$, and let $N$ be a line bundle of degree $g$. 
%

\begin{lemma} A general extension $0 \to F \to E \to N \to 0$ is a stable vector bundle. \label{EStable} \end{lemma}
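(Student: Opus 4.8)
The plan is to work inside the space of extension classes $\mathrm{Ext}^1(N,F)$ and to prove that the classes $e$ for which $E_e$ fails to be stable form a proper closed subvariety; a general $e$ then yields a stable $E$. First I record the numerics. Since $\deg F=(r-1)(g-1)-1$ and $\deg N=g$, the bundle $E$ has rank $r$ and degree $r(g-1)$, so $\mu(E)=g-1$, while $\mu(F)=(g-1)-\tfrac{1}{r-1}$ and $\mu(N)=g$; in particular $\mu(F)<\mu(E)<\mu(N)$. Moreover $\deg F$ and $\rank F=r-1$ are coprime, so the semistable bundle $F$ is in fact stable. As $F\otimes N^{-1}$ is semistable of negative slope $-\tfrac{r}{r-1}$, we get $h^0(C,F\otimes N^{-1})=0$ and hence $\dim\mathrm{Ext}^1(N,F)=h^1(C,F\otimes N^{-1})=(r-1)g+1>0$; so nonsplit extensions exist, and since the non-stable locus is closed it suffices to show it is proper.

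Next I analyse a subbundle $G\subset E$ with $\mu(G)\ge\mu(E)=g-1$ and $0<\rank G=k<r$. If $G\subseteq F$, then $\mu(G)<\mu(F)<g-1$ by stability of $F$, a contradiction; hence the composite $G\to N$ is nonzero. Writing $G_F=G\cap F$ (of rank $k-1$) and $G_N=\Image(G\to N)\subseteq N$, stability of $F$ gives $\deg G_F\le(k-1)(g-1)-1$, while $\deg G_N\le g$. The inequality $\deg G\ge k(g-1)$ then forces, when $k\ge2$, the equalities $\deg G=k(g-1)$, $G_N=N$ and $\deg G_F=(k-1)(g-1)-1$; when $k=1$ it forces $G\cong G_N$ with $\deg G_N\in\{g-1,g\}$. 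Thus any failure of stability arises either (i) from a lift to $E$ of a subsheaf $N(-p)\subseteq N$, or of $N$ itself (the case $k=1$); or (ii) from a rank-$(k-1)$ subbundle $G_F\subset F$ of maximal degree $(k-1)(g-1)-1$ over which the extension partially splits (the case $k\ge2$).

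Each condition cuts out a linear locus that I would bound dimensionally. In case (i), a lift of $N(-p)\hookrightarrow N$ exists precisely when $e$ lies in the kernel of $\mathrm{Ext}^1(N,F)\to\mathrm{Ext}^1(N(-p),F)$, a subspace of dimension $\le r-1$; as $p$ varies over $C$ these sweep out a subvariety of dimension $\le r$, and the splitting locus is just $\{0\}$. Since $r<(r-1)g+1$, case (i) is proper. In case (ii), such a $G$ exists if and only if the pushout $q_{*}e$ vanishes, where $q\colon F\to Q:=F/G_F$; equivalently $e$ lies in $W_{G_F}:=\Image(\mathrm{Ext}^1(N,G_F)\to\mathrm{Ext}^1(N,F))$, and $\dim W_{G_F}\le h^1(C,G_F\otimes N^{-1})=(k-1)g+1$, using that $h^0(C,G_F\otimes N^{-1})=0$ as $G_F\otimes N^{-1}\subseteq F\otimes N^{-1}$. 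Letting $\delta_k$ be the dimension of the family of admissible $G_F$, the case-(ii) locus has dimension $\le\delta_k+(k-1)g+1$, which is strictly below $\dim\mathrm{Ext}^1(N,F)=(r-1)g+1$ as soon as $\delta_k<(r-k)g$.

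The crux is therefore the bound $\delta_k<(r-k)g$ on the family of rank-$(k-1)$ subbundles of $F$ of the maximal permitted degree $(k-1)(g-1)-1$. I would bound $\delta_k$ by the Quot-scheme tangent space $\dim\Hom(G_F,Q)=h^0(C,G_F^{*}\otimes Q)$; the bundle $G_F^{*}\otimes Q$ has rank $(k-1)(r-k)$ and the small slope $\tfrac{1}{k-1}$, so a Clifford-type estimate on its Harder--Narasimhan pieces bounds $h^0$ by a quantity of order $r^2$, independent of $g$. The hypothesis $g\ge(2r+2)(2r+1)$ then gives ample room for $\delta_k<(r-k)g$. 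Granting this, the union of the finitely many bad loci over $1\le k\le r-1$ is a proper closed subset of $\mathrm{Ext}^1(N,F)$, whence the general extension is stable. I expect the genuine difficulty to be exactly this uniform control of the families of maximal subbundles: $F$ is only assumed semistable (hence stable), not general, so the estimate on $\delta_k$ must hold for every such $F$, which is where the Harder--Narasimhan/Segre formalism is needed.
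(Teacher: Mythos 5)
Your proposal is correct and follows the same basic route as the paper's proof: both arguments compare a potential destabilising subbundle $G\subset E$ with the extension via the diagram relating $0\to G\cap F\to G\to G/(G\cap F)\to 0$ to $0\to F\to E\to N\to 0$, reduce to the two extremal cases (a line subbundle lifted from $N(-p)$ or from $N$ itself, and a rank-$k$ subbundle with $G\cap F$ of the maximal degree $(k-1)(g-1)-1$ surjecting onto $N$), and then bound the corresponding loci inside $\mathrm{Ext}^1(N,F)=H^1(C,\Hom(N,F))$. Where you genuinely diverge is in the higher-rank case: you insist on controlling the union of the linear loci $W_{G_F}$ as $G_F$ varies, introducing the dimension $\delta_k$ of the family of maximal subbundles, whereas the paper only verifies, for each fixed $G_1=G\cap F$, that $(\iota_1)_*\colon H^1(C,\Hom(N,G_1))\to H^1(C,\Hom(N,F))$ is not surjective (equivalently $h^1(C,\Hom(N,F/G_1))>0$), leaving the sweep over the family implicit. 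Your extra step is the right thing to worry about, and the estimate you leave as a sketch does close: since $\deg G_F$ is maximal, semistability of $F$ forces $\mu_{\min}(G_F)\ge g-2$ and $\mu_{\max}(F/G_F)\le g$, so every Harder--Narasimhan slope of $\Hom(G_F,F/G_F)$ is at most $2$ and a Clifford-type bound gives $\delta_k\le h^0(C,\Hom(G_F,F/G_F))\le 2(k-1)(r-k)$, which is below the codimension $(r-k)g$ of each $W_{G_F}$ as soon as $g>2(k-1)$ --- so no appeal to the hypothesis $g\ge(2r+2)(2r+1)$ is actually needed here, and the bound holds uniformly for every semistable $F$ as required. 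In short, your argument is sound and is in effect a more fully dotted version of the paper's proof; the only unfinished step is one you correctly identified as the crux and which is straightforward to supply.
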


\begin{proof} Any subbundle $G$ of $E$ fits into a diagram
\[ \xymatrix{ 0 \ar[r] & G_1 \ar[r] \ar[d]_{\iota_1} & G \ar[r] \ar[d] & N(-D) \ar[d]_{\iota_2} \ar[r] \ar[d] & 0 \\
 0 \ar[r] & F \ar[r] & E \ar[r] & N \ar[r] & 0 } \]
where $D$ is an effective divisor on $C$. If $\iota_2 = 0$, then $\mu(G) = \mu(G_1) \le \mu (F) < \mu(E)$. Suppose $\iota_2 \ne 0$, and write $s := \rank ( G_1 )$. If $s \ne 0$, the semistability of $F$ implies that
\[ \deg (G_1) \ \le \ s(g-1) - \frac{s}{r-1} , \]
so in fact $\deg (G_1) \le s(g-1) - 1$. As $\deg (N) = g$, we have $\deg (G) \le (s+1)(g-1)$. Thus we need only exclude the case where $\deg (G_1) = s(g-1) - 1$ and $D = 0$, so $\iota_2 = \Iden_N$. In this case, the existence of the above diagram is equivalent to $[E] = (\iota_1)_* [G]$ for some extension $G$, that is, $[E] \in \Image \left( ( \iota_1 )_* \right)$. 
 It therefore suffices to check that
\[ (\iota_1)_* \colon H^1 ( C, \Hom(N, G_1)) \ \to \ H^1 ( C, \Hom(N, F)) \]
is not surjective. This follows from the fact, easily shown by a Riemann--Roch calculation, that $h^1 ( C, \Hom(N, F/G_1)) > 0$.


If $s = 0$, then we need to exclude the lifting of $G = N(-p)$ for all $p \in C$, that is,
\[ [E] \ \not\in \ \bigcup_{p \in C} \left( \Ker \left( H^1 (C, \Hom( N, F)) \ \to \ H^1 (C, \Hom (N(-p), F)) \right) \right) . \]
A dimension count shows that this locus is not dense in $H^1 (C, \Hom (N, F))$.
\end{proof} 

\begin{lemma} \label{CoboundarySurjective} Suppose $h^0 (C, N) \ge h^1 (C, F)$. Then for a general extension $0 \to F \to E \to N \to 0$, the coboundary map is surjective. \end{lemma}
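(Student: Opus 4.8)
The plan is to identify the coboundary map $\delta \colon H^0(C, N) \to H^1(C, F)$ in the long exact cohomology sequence of $0 \to F \to E \to N \to 0$ with cup product against the extension class. Writing $e \in \mathrm{Ext}^1(N, F) = H^1(C, \Hom(N,F))$ for the class of $E$, one has $\delta(s) = e \cup s$, where the cup product is formed using the evaluation $\Hom(N,F) \otimes N \to F$. The statement then becomes: for general $e$, the linear map $s \mapsto e \cup s$ from $H^0(C,N)$ to $H^1(C,F)$ is surjective. Since non-surjectivity is a closed condition on $e$, it suffices to show that the non-surjective classes do not fill $H^1(C,\Hom(N,F))$.

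Next I would dualise using Serre duality. Surjectivity of $\delta$ is equivalent to injectivity of its transpose, and under the identifications $H^1(C,F)^* = H^0(C, \Kc\otimes F^*)$ and $H^1(C,\Hom(N,F))^* = H^0(C, \Kc\otimes N\otimes F^*)$, the compatibility of cup product with the two Serre pairings yields $\langle e\cup s,\, \phi\rangle = \langle e,\, s\cdot\phi\rangle$ for $\phi \in H^0(C,\Kc\otimes F^*)$ and $s \in H^0(C, N)$, where $s\cdot\phi \in H^0(C,\Kc\otimes N\otimes F^*)$. Hence $\delta$ fails to be surjective for $e$ precisely when there is a nonzero $\phi \in H^0(C,\Kc\otimes F^*)$ with $e$ annihilating the subspace $\phi\cdot H^0(C,N) \subseteq H^0(C,\Kc\otimes N\otimes F^*)$. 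The non-surjective locus is therefore the union
\[ \bigcup_{0\ne\phi\in H^0(C,\Kc\otimes F^*)} \big(\phi\cdot H^0(C,N)\big)^{\perp} \ \subseteq\ H^1(C,\Hom(N,F)). \]

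Finally I would run the dimension count. For any nonzero $\phi$, multiplication by $\phi$ is an injective sheaf map $N \hookrightarrow \Kc\otimes N\otimes F^*$ (its kernel would be a rank-zero subsheaf of the line bundle $N$), so $s\mapsto s\cdot\phi$ is injective on sections and $\dim(\phi\cdot H^0(C,N)) = h^0(C,N)$. Thus each $\big(\phi\cdot H^0(C,N)\big)^{\perp}$ has codimension $h^0(C,N)$, while $\phi$ ranges over $\PP H^0(C,\Kc\otimes F^*)$, of dimension $h^0(C,\Kc\otimes F^*)-1 = h^1(C,F)-1$. The union then has dimension at most $(h^1(C,F)-1) + \big(h^1(C,\Hom(N,F)) - h^0(C,N)\big)$, which is strictly less than $h^1(C,\Hom(N,F))$ exactly when $h^0(C,N) \ge h^1(C,F)$ --- precisely the hypothesis. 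Hence the non-surjective locus is a proper closed subset and a general extension has surjective coboundary.

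The step I expect to be the main obstacle is the second one: establishing the sign-correct compatibility $\langle e\cup s,\phi\rangle = \langle e, s\cdot\phi\rangle$ between the cup product and the two Serre duality pairings, as it is this identification that converts the surjectivity question into the clean statement that a general $e$ avoids a union of linear subspaces. Once it is in place, the injectivity of multiplication by $\phi$ and the dimension count are routine, and the numerical hypothesis $h^0(C,N)\ge h^1(C,F)$ emerges as exactly the inequality required.
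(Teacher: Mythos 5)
Your argument is correct, but it takes a genuinely different route from the paper. You bound the locus of ``bad'' extension classes: after identifying the coboundary with cup product against $e\in H^1(C,\Hom(N,F))$ and dualising via Serre duality, you exhibit the non-surjective classes as the union, over $\phi\in\PP H^0(C,\Kc\otimes F^*)$, of the annihilators of the subspaces $\phi\cdot H^0(C,N)$; injectivity of multiplication by a nonzero $\phi$ (valid since $\phi$ is generically nonvanishing on the rank-$(r-1)$ bundle $\Kc\otimes F^*$) makes each annihilator of codimension $h^0(C,N)$, and the incidence-variety dimension count shows the union is proper exactly when $h^0(C,N)\ge h^1(C,F)$. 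The cup-product/Serre-duality compatibility you flag as the main obstacle is standard (associativity of cup product followed by the trace pairing), and the signs are irrelevant for a vanishing statement, so there is no real gap. The paper instead argues constructively: it builds a single extension $E_0$ with surjective coboundary by taking an elementary transformation $0\to F\to\tilde F\to\tau\to 0$ supported at $n=h^1(C,F)$ general points whose principal parts $\phi_i$ span $H^1(C,F)$, choosing sections $s_i\in H^0(C,N)$ dual to those points (this is where $h^0(C,N)\ge n$ enters, via independence of conditions), and defining $[E_0]$ so that $s_i\cup[E_0]=[\phi_i]$. Your approach is shorter and arguably cleaner for the lemma in isolation; the paper's pays off later, since the explicit bundle $E_0$ and the divisor $D=p_1+\cdots+p_{r+1}$ are reused in the subsequent lemmas (the description of the subsheaf generated by $H^0(C,E_0)$, the coherent-systems argument, and Lemma 2.10), so a purely existential proof would leave those later steps without their concrete input.
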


\begin{proof} Clearly it suffices to exhibit one extension $E_0$ with the required property. We write $n := h^1 (C, F)$ for brevity.

Let $0 \to F \to \tilde{F} \to \tau \to 0$ be an elementary transformation with $\deg (\tau) = n$ and such that the image of $\Gamma(C, \tau)$ generates $H^1 (C, F)$. We may assume that $\tau$ is supported along $n$ general points $p_1 , \ldots , p_n$ of $C$ which are not base points of $|N|$. Then $\tau_{p_i}$ is generated by an element
\[ \phi_i \ \in \ \left( \frac{F(p_i)}{F}\right)_{p_i} \]
defined up to nonzero scalar multiple. We write $[ \phi_i ]$ for the class in $H^1 (C, F)$ defined by $\phi_i$. 

Now $h^0 (C, N) \ge n$ and the image of $C$ is nondegenerate in $|N|^*$. 
 As the $p_i$ can be assumed to be general, they impose independent conditions on sections of $N$. We choose sections $s_1, \ldots , s_n \in H^0 ( C, N )$ such that $s_i ( p_i ) \neq 0$ but $s_i (p_j) = 0$ for $j \ne i$. 
 For $1 \le i \le n$, let $\eta_i$ be a local section of $N^{-1}$ such that $\eta_i (s_i (p_i) ) = 1$.

Let $0 \to F \to E_0 \to N \to 0$ be the extension with class $[E_0]$ defined by the image of
\[ \left( \eta_1 \otimes \phi_1 , \ldots , \eta_n \otimes \phi_n \right) \]
by the coboundary map $\Gamma (C, N^{-1} \otimes \tau ) \to H^1 (C, N^{-1} \otimes F)$. Then $s_i \cup [E_0] = [ \phi_i ]$ for $1 \le i \le n$. Hence the image of $\cdot \cup [E_0]$ spans $H^1 (C, F)$. \end{proof}

We now make further assumptions on $F$ and $N$. If $r = 2$, then $g \ge (2r+2)(2r+1) = 30$. Hence by the Brill--Noether theory of line bundles on $C$, we may choose a line bundle $F$ of degree $g-2$ with $h^0 (C, F) = 2$ and $|F|$ base point free. 
 If $r \ge 3$: Since $g \ge 3$, we have $(r-1)(g-1) - 1 \ge r$. Therefore, by \cite[Theorem 5.1]{BBN2} we may choose a semistable bundle $F$ of rank $r-1$ and degree $(r-1)(g-1) - 1$ which is generated and satisfies $h^0 (C, F) = r$, so $h^1 (C, F) = r + 1$. 

Furthermore, again by Brill-Noether theory, since $g \ge (2r+2)(2r+1)$ we may choose $N \in \Pic^g (C)$ such that $h^0 (C, N) = 2r+2$ and $|N|$ is base point free. By Lemma \ref{CoboundarySurjective}, we may choose an $(r+1)$-dimensional subspace $\Pi \subset H^0 (C, E)$ lifting from $H^0 (C, N)$. 

\begin{proposition} \label{RestrictedPetriTraceMap} Let $F$, $N$ and $\Pi$ be as above, and let $0 \to F \to E \to N \to 0$ be a general extension. Then the restricted Petri trace map $\Pi \otimes H^0 ( C, \Kc \otimes E^* ) \ \to \ H^0 ( C, \Kc )$ is injective. \end{proposition}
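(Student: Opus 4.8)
The plan is to show that, after natural identifications, the restricted Petri trace map is nothing but the ordinary multiplication (Petri) map of the line bundle $N$, whose injectivity is guaranteed by the Petri generality of $C$. The crux is the following structural observation: \emph{every global homomorphism $E \to \Kc$ factors through the quotient $\pi \colon E \to N$}; equivalently, the inclusion $H^0(C, \Kc N^{-1}) \hookrightarrow H^0(C, \Kc \otimes E^*)$ induced by $N^{-1} \hookrightarrow E^*$ is an isomorphism.

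First I would establish this observation by a dimension count. Dualising $0 \to F \to E \to N \to 0$ and twisting by $\Kc$ gives $0 \to \Kc N^{-1} \to \Kc \otimes E^* \to \Kc \otimes F^* \to 0$, so it suffices to show $h^0(C, \Kc \otimes E^*) = h^0(C, \Kc N^{-1})$. On one hand, Riemann--Roch and Serre duality give $h^0(C, \Kc N^{-1}) = 2r+1$ (here $\deg(\Kc N^{-1}) = g-2$ and $h^1(C, \Kc N^{-1}) = h^0(C, N) = 2r+2$). On the other hand, since $\chi(E) = 0$ we have $h^0(C, \Kc \otimes E^*) = h^1(C, E) = h^0(C, E)$; and the cohomology sequence of $0 \to F \to E \to N \to 0$, together with the surjectivity of the coboundary $\delta \colon H^0(C, N) \to H^1(C, F)$ for a general extension (Lemma \ref{CoboundarySurjective}), yields $h^0(C, E) = h^0(C, F) + \dim \ker \delta = r + (2r+2 - (r+1)) = 2r+1$. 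Hence the inclusion above is an equality.

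Granting this, every $t \in H^0(C, \Kc \otimes E^*)$ is of the form $t = \psi \circ \pi$ for a unique $\psi \in \Hom(N, \Kc) = H^0(C, \Kc N^{-1})$, so for $s \in \Pi$ with image $\bar{s} := \pi(s)$ one computes $\bar{\mu}(s \otimes t) = \tr(s \otimes t) = t(s) = \psi(\bar{s}) = \bar{s} \cdot \psi$. Recalling that $\Pi$ maps isomorphically onto $\bar{\Pi} := \ker \delta \subset H^0(C, N)$, this identifies the restricted Petri trace map with the restriction to $\bar{\Pi} \otimes H^0(C, \Kc N^{-1})$ of the multiplication map $H^0(C, N) \otimes H^0(C, \Kc N^{-1}) \to H^0(C, \Kc)$. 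Since $C$ is Petri general, the line bundle $N$ is Petri injective, i.e.\ this multiplication map is injective; restricting the first factor to $\bar{\Pi}$ preserves injectivity, and the conclusion follows.

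The main obstacle is really the structural observation of the first two paragraphs: one must recognise that the generality of the extension (through the surjectivity of $\delta$ supplied by Lemma \ref{CoboundarySurjective}) forces $H^0(C, \Kc \otimes E^*)$ to consist entirely of homomorphisms factoring through $N$, so that no genuinely ``$F$-directed'' homomorphisms intervene. Once this is in place the problem collapses to the classical Petri map of a single line bundle and no further genericity of $E$ is needed; it is worth checking for consistency that $\dim \bar{\Pi} \cdot \dim H^0(C, \Kc N^{-1}) = (r+1)(2r+1) \le g = \dim H^0(C, \Kc)$, which holds since $g \ge (2r+2)(2r+1)$.
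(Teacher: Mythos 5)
Your proposal is correct and takes essentially the same route as the paper's own proof: both use the surjectivity of the coboundary map from Lemma \ref{CoboundarySurjective} to identify $H^0(C, \Kc \otimes N^{-1})$ with $H^0(C, \Kc \otimes E^*)$, thereby reducing the restricted Petri trace map to (a restriction of) the Petri map $\mu_N$ of the line bundle $N$, whose injectivity follows from Petri generality of $C$. The only cosmetic differences are that you establish the identification $H^0(C, \Kc \otimes N^{-1}) \cong H^0(C, \Kc \otimes E^*)$ by a dimension count rather than by the induced isomorphism $H^1(C,E) \isom H^1(C,N)$, and you compute the trace directly as the contraction $t(s)$ where the paper argues via the eigenspace of the rank-one endomorphism.
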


\begin{proof} Choose a basis $\sigma_1 , \ldots , \sigma_{r+1}$ for $\Pi$. For each $i$, write $\widetilde{\sigma_i}$ for the image of $\sigma_i$ in $H^0 (C, N)$.

By Lemma \ref{CoboundarySurjective}, there is an isomorphism $H^1 (C, E) \ \xrightarrow{\sim} \ H^1 (C, N)$. Hence, by Serre duality, the injection $\Kc \otimes N^{-1} \hookrightarrow \Kc \otimes E^*$ induces an isomorphism on global sections. Choose a basis $\tau_1 , \ldots , \tau_{2r+1}$ for $H^0 (C, \Kc \otimes E^*)$. For each $j$, write $\widetilde{\tau_j}$ for the preimage of $\tau_j$ by the aforementioned isomorphism.

For each $i$ and $j$ we have a commutative diagram
\[ \xymatrix{ E^* \ar[r]^{{^t\sigma_i}} & \Oc \ar[r]^-{\tau_j} \ar[dr]_{\widetilde{\tau_j}} & \Kc \otimes E^* \\
 N^{-1} \ar[u] \ar[ur]_{\widetilde{\sigma_i}} & & \Kc \otimes N^{-1} \ar[u] } \]
where the top row defines the twisted endomorphism
\[ \mu \left( \sigma_i \otimes \tau_j \right) \ \in \ H^0 (C, \Kc \otimes \End E^* ) \ = \ H^0 (C, \Kc \otimes \End E) . \]
Clearly this has rank one. As it factorises via $\Kc \otimes N^{-1}$, at a general point of $C$ the eigenspace corresponding to the single nonzero eigenvalue is identified with the fibre of $N^{-1}$ in $E^*$. Hence the Petri trace $\bar{\mu}( \sigma_i \otimes \tau_j)$ may be identified with the restriction to $N^{-1}$. By the diagram, we may identify this restriction with
\[ \mu_N \left( \widetilde{\sigma_i} \otimes \widetilde{\tau_j} \right) \ \in \ H^0 (C, \Kc) , \]
where $\mu_N$ is the Petri map of the line bundle $N$. Since $C$ is Petri, $\mu_N$ is injective. Hence the elements $\bar{\mu}(\sigma_i \otimes \tau_j) = \mu_N ( \widetilde{\sigma_i} \otimes \widetilde{\tau_j} )$ are independent in $H^0 (C, \Kc)$. This proves the statement. \end{proof}

Before proceeding, we need to recall some background on coherent systems (see \cite[\S 2]{BBN} for an overview and \cite{BGMN} for more detail): We recall that a \textsl{coherent system of type $(r, d, k)$} is a pair $(W, \Pi)$ where $W$ is a vector bundle of rank $r$ and degree $d$ over $C$, and $\Pi \subseteq H^0 (C, W)$ is a subspace of dimension $k$. There is a stability condition for coherent systems depending on a real parameter $\alpha$, and a moduli space $G(\alpha; r, d, k)$ for equivalence classes of $\alpha$-semistable coherent systems of type $(r, d, k)$. If $k \ge r$, then by \cite[Proposition 4.6]{BGMN} there exists $\alpha_L \in \mathbb{R}$ such that $G(\alpha; r, d, k)$ is independent of $\alpha$ for $\alpha > \alpha_L$. This ``terminal'' moduli space is denoted $G_L$. Moreover, the locus
\[ U(r, d, k) \ := \ \{ (W, \Pi) \in G_L : \hbox{ $W$ is a stable vector bundle} \} \]
is an open subset of $G_L$. For us, $d = r(g-1)$ and $k = r+1$. To ease notation, we write $U := U(r, r(g-1), r+1)$.

Let now $N_1$ be a line bundle of degree $g$ with $h^0 (C, N_1) \ge r+2$. Let $F$ be as above, and let $0 \to F \to E \to N_1 \to 0$ be a general extension.

\begin{lemma} For a general subspace $\Pi \subset H^0 (C, E)$ of dimension $r+1$, the coherent system $(E, \Pi)$ defines an element of $U$. \end{lemma}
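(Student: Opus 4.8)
The plan is to verify the two conditions defining membership in $U$: that $E$ is a stable bundle, and that the coherent system $(E,\Pi)$ is $\alpha$-stable for all $\alpha$ in the terminal range $\alpha > \alpha_L$ (so that $(E,\Pi) \in G_L$). Since $F$ is semistable of rank $r-1$ and degree $(r-1)(g-1)-1$ and $N_1$ has degree $g$, the bundle $E$ is a general extension of exactly the type treated in Lemma~\ref{EStable}; hence $E$ is stable and the first condition comes for free.

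For the coherent-system stability, I would first reduce to a purely numerical condition on subbundles. Recall that the $\alpha$-slope of a subsystem $(W',\Pi')$ of rank $r'$, degree $d'$ and $k' = \dim \Pi'$ is $\tfrac{d'}{r'} + \alpha \tfrac{k'}{r'}$, and that it suffices to test saturated subsystems, for which $W'$ is a subbundle and $\Pi' = \Pi \cap H^0(C,W')$. For $\alpha \gg 0$ the sign of $\mu_\alpha(E,\Pi) - \mu_\alpha(W',\Pi')$ is governed by the comparison of $\tfrac{k'}{r'}$ with $\tfrac{r+1}{r}$. Because $\gcd(r,r+1) = 1$ and $0 < r' < r$, equality $\tfrac{k'}{r'} = \tfrac{r+1}{r}$ is impossible, so the borderline case (which would have forced a separate degree comparison) never arises; and in any event $E$ stable gives $\tfrac{d'}{r'} < g-1 = \tfrac{d}{r}$ for every proper subbundle. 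Thus $(E,\Pi) \in G_L$ is equivalent to the condition that every proper subbundle $W' \subset E$ of rank $r'$ satisfies $\dim\left( \Pi \cap H^0(C,W') \right) \le r'$; dually, the image of $\Pi$ in $H^0(C,Q)$ has dimension at least $q+1$ for every rank-$q$ quotient bundle $Q$ of $E$.

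The remaining task is to show this numerical condition holds for a general $(r+1)$-dimensional $\Pi \subseteq H^0(C,E)$. Writing $N := h^0(C,E)$, I would regard $\Pi$ as a point of $\Gr(r+1, N)$ and, for a fixed subbundle $W'$ with $m' := h^0(C,W')$, note that the locus $\{ \Pi : \dim(\Pi \cap H^0(C,W')) \ge r'+1 \}$ is a Schubert subvariety of codimension $(r'+1)(N - m' - r + r')$. Only subbundles with $m' \ge r'+1$ can contribute, so I would bound both the number of sections $m'$ (via stability of $E$, which forces $\deg W' \le r'(g-1) - 1$, together with a Clifford-type estimate) and the dimension of the family of such subbundles (via a Quot-scheme or twisted Brill--Noether estimate, using that $E$ is a general extension). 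Summing the dimensions of the bad loci over the finitely many numerical types of section-rich subbundles and comparing with $\dim \Gr(r+1,N)$, I would conclude that a general $\Pi$ avoids all of them.

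The hard part will be this last dimension count: the section-rich subbundles are exactly those that can destabilise, and one must show that they occur in families small enough, and with few enough sections, that the associated Schubert loci do not fill the Grassmannian. This is precisely where the hypothesis $g \ge (2r+2)(2r+1)$ should enter, providing enough room in $H^0(C,E)$ and forcing the section-rich subbundles into low-dimensional Brill--Noether strata; controlling these strata for the specific general extension $E$ (rather than for a general stable bundle) is the delicate point.
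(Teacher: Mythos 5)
Your reduction of membership in $G_L$ to the numerical condition $\dim\bigl(\Pi \cap H^0(C,W')\bigr) \le \rank (W')$ for every proper saturated subbundle $W' \subset E$ is correct, and the observation that $\gcd(r,r+1)=1$ eliminates the borderline case is accurate. But the proof stops exactly where the real work begins. The entire content of the lemma is the assertion that a general $\Pi$ avoids the Schubert loci attached to section-rich subbundles, and you only describe a strategy for this: you would need, for every rank $r'$ and every admissible degree, a bound on $h^0(C,W')$ and on the dimension of the relevant family of subsheaves of the \emph{specific} bundle $E$, uniform enough that Schubert codimension minus family dimension stays positive. No such bounds are established, and they are not routine: controlling the sections of all saturated subsheaves of a particular extension $0 \to F \to E \to N_1 \to 0$ (where $F$ itself already carries $r$ sections in rank $r-1$) is a hard Brill--Noether-type problem, and the appeal to ``a Clifford-type estimate'' and ``a Quot-scheme or twisted Brill--Noether estimate'' does not engage with it. As written, the argument restates the problem rather than solving it.

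The paper sidesteps this count entirely by exploiting the special numerical type of the coherent system: since $k = r+1 = \rank (E)+1$, one may invoke the criterion of Bhosle--Brambila-Paz--Newstead \cite[Theorem 3.1 (3)]{BBN}, which places $(E,\Pi)$ in $G_L$ once $\Pi$ generically generates $E$ and the subsheaf $E'$ generated by the sections satisfies $h^0(C,(E')^*)=0$. Both conditions are checked by hand on the explicit extension $E_0$ of Lemma \ref{CoboundarySurjective}: the subsheaf generated by $H^0(C,E_0)$ is an extension $0 \to F \to E_0' \to N_1(-D) \to 0$ with $D = p_1+\cdots+p_{r+1}$, and $h^0(C,(E_0')^*)=0$ because $N_1^{-1}(D)$ and $F^*$ are semistable of negative degree; openness of these conditions in families then gives the statement for a generic $(E,\Pi)$, and Lemma \ref{EStable} upgrades $G_L$ to $U$. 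To salvage your route you would have to either carry out the subbundle dimension count in full or, more realistically, switch to this dual-span criterion.
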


\begin{proof} Recall the bundle $E_0$ defined in Lemma \ref{CoboundarySurjective}, which clearly is generically generated. Let us describe the subsheaf $E_0'$ generated by $H^0 (C, E_0)$.

Write $p_1 + \cdots + p_{r+1} =: D$. Clearly $s \cup [E_0] = 0$ for any $s \in H^0 (C, N_1(-D))$. Since the $p_i$ are general points,
\[ h^0 (C, N_1(-D)) \ = \ h^0 (C, N_1) - (r+1) \ = \ \dim \left( \Ker (\cdot \cup [E_0] \colon H^0 (C, N_1) \to H^1 (C, F) \right) . \]
Therefore, the image of $H^0 (C, E_0)$ in $H^0 (C, N_1)$ is exactly $H^0 (C, N_1(-D))$. As the subbundle $F \subset E_0$ is globally generated, $E_0'$ is an extension $0 \to F \to E_0' \to N_1(-D) \to 0$. Dualising and taking global sections, we obtain
\[ 0 \to H^0 ( C, N_1^{-1}(D)) \to H^0 (C, (E_0')^*) \to H^0 (C, F^*) \to \cdots \]
Since both $N_1^{-1}(D)$ and $F^*$ are semistable of negative degree, 
 $h^0 ( C, N_1^{-1}(-D)) = h^0 (C, F^*) = 0$, so $h^0 (C, (E_0')^*) = 0$.

Now let $\Pi_1 \subset H^0 (C, E_0)$ be any subspace of dimension $r+1$ generically generating $E_0$. Since $h^0 (C, (E_0')^*) = 0$, by \cite[Theorem 3.1 (3)]{BBN} the coherent system $(E_0, \Pi_1)$ defines a point of $G_L$. Since generic generatedness and vanishing of $h^0 (C, (E')^*)$ are open conditions on families of bundles with a fixed number of sections, the same is true for a generic $(E, \Pi)$ where $E$ is an extension $0 \to F \to E \to N_1 \to 0$. By Lemma \ref{EStable}, in fact $(E, \Pi)$ belongs to $U$. \end{proof}
 
\begin{lemma} For generic $E$ represented in $U$, we have $h^0 (C, E) = h^0 (C, \Kc \otimes E^*) = r+1$. \label{rPlusOne} \end{lemma}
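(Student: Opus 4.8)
The plan is to reduce the statement to producing a single stable bundle in $U$ with exactly $r+1$ sections, and then to invoke upper semicontinuity. First note that the equality $h^0(C,E) = h^0(C,\Kc \otimes E^*)$ is automatic: since $\deg E = r(g-1)$ we have $\chi(E) = 0$, so Riemann--Roch gives $h^0(C,E) = h^1(C,E)$, and Serre duality identifies $h^1(C,E)$ with $h^0(C,\Kc\otimes E^*)$. It therefore suffices to prove $h^0(C,E) = r+1$ for generic $E$ represented in $U$.

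Since every $(E,\Pi) \in U$ carries a subspace $\Pi \subseteq H^0(C,E)$ of dimension $r+1$, we always have $h^0(C,E)\ge r+1$. Because $h^0$ is upper semicontinuous, the locus $\{ h^0(C,E) = r+1 \}$ is open in $U$, and $r+1$ is the minimal possible value of $h^0(C,E)$ there; so it is enough to exhibit one point of $U$ at which equality holds.

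To produce such a point I would sharpen the choice of $N_1$. By the Brill--Noether theory of line bundles on the Petri general curve $C$, and since $g \ge (2r+2)(2r+1) > (r+1)(r+2)$, one may choose $N_1 \in \Pic^g(C)$ with $h^0(C,N_1) = r+2$ \emph{exactly}. Take a general extension $0 \to F \to E \to N_1 \to 0$. Since $h^0(C,N_1) = r+2 \ge r+1 = h^1(C,F)$, Lemma \ref{CoboundarySurjective} applies and the coboundary map $\partial \colon H^0(C,N_1) \to H^1(C,F)$ is surjective. Reading off the long exact sequence in cohomology of the extension then gives $h^0(C,E) = h^0(C,F) + \dim \Ker(\partial) = r + \left( h^0(C,N_1) - h^1(C,F) \right) = r+1$, using $h^0(C,F) = r$.

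Finally, by Lemma \ref{EStable} together with the previous lemma, a general such extension equipped with a general $(r+1)$-dimensional subspace $\Pi$ defines a point of $U$; choosing the extension general enough to satisfy simultaneously these conditions and the surjectivity of $\partial$ (all open, nonempty conditions) yields a point of $U$ with $h^0(C,E) = r+1$. Combined with the semicontinuity argument and the minimality of $r+1$, this forces $h^0(C,E)=r+1$ for generic $E$. The only delicate point is the sharpened choice $h^0(C,N_1)=r+2$: with only the inequality $h^0(C,N_1)\ge r+2$ the kernel of $\partial$ could be larger and $h^0(C,E)$ would exceed $r+1$, so the Brill--Noether existence of a degree-$g$ line bundle with exactly $r+2$ sections is the crux of the argument.
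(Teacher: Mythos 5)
Your argument is correct and takes essentially the same route as the paper: both proofs exhibit one extension $0 \to F \to E \to N_1 \to 0$ with $h^0(C,N_1)=r+2$ and surjective coboundary, compute $h^0(C,E)=h^0(C,F)+h^0(C,N_1)-h^1(C,F)=r+1$, and then spread this out by semicontinuity (the paper packages the witness as a general member of the irreducible family of extensions with $N_1$ ranging over $B^{r+2}_{1,g}$). The only point worth making explicit, in your write-up as in the paper's, is that deducing the statement for \emph{generic} $E$ in $U$ from one witness plus openness uses the irreducibility of $U$, which the paper cites from \cite[Theorem 3.1 (4) and Remark 6.2]{BBN}.
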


\begin{proof} Since $C$ is Petri general, $B^{r+2}_{1, g}$ is irreducible in $\Pic^g (C)$. Thus there exists an irreducible family parametrising extensions of the form $0 \to F \to E \to N_1 \to 0$ where $F$ is as above and $N_1$ ranges over $B^{r+2}_{1, g}$. This contains the extension $E_0$ constructed above. By Lemma \ref{CoboundarySurjective}, a general element $E_1$ of the family satisfies $h^0 (C, E_1) = r+1$. By semicontinuity, the same is true for general $E$ represented in $U$. \end{proof}

Now by \cite[Theorem 3.1 (4) and Remark 6.2]{BBN}, the locus $U$ is irreducible. Write $B$ for the component of $B^{r+1}_{r, r(g-1)}$ containing the image of $U$, and $B'$ for the sublocus $\{ E \in B : h^0 (C, E) = r+1\}$. Set $U' := U \times_B B'$; clearly $U' \cong B'$.

Let $\tB \to B$ be an \'etale cover such that there is a Poincar\'e bundle $\cE \to \tB \times C$. In a natural way we obtain a commutative cube
\[ \xymatrix{ \tU' \ar[r] \ar[drr] \ar[d]^\wr & \tU \ar[d] \ar[drr] & & \\
 \tB' \ar[r] \ar[drr] & \tB \ar[drr] & U' \ar[r] \ar[d]^\wr & U \ar[d] \\
& & B' \ar[r] & B } \]
where all faces are fibre product diagrams. 
 By a standard construction, we can find a complex of bundles $\alpha \colon K^0 \to K^1$ over $\tB$ such that $\Ker (\alpha_b ) \cong H^0 (C, \cE_b)$ for each $b \in \tB$. Following \cite[Chapter IV]{ACGH}, we consider the Grassmann bundle $\Gr (r+1, K^0 )$ over $\tB$ and the sublocus
\[ \cG \ := \ \{ \Lambda \in \Gr (r+1, K^0) : \alpha|_{\Lambda} = 0 \} . \]
Write $\cG_1 := \cG \times_{\tB} \tU$. The fibre of $\cG_1$ over $(E_b , \Pi) \in \tU$ is then $\Gr (r+1, H^0 (C, \Kc \otimes E_b^*))$.

Now let $E_0$ be a bundle as constructed in Lemma \ref{CoboundarySurjective} with $h^0 (C, E_0) = 2r+2$, and let $\Pi_0$ be a generic choice of $(r+1)$-dimensional subspace of $H^0 (C, E_0)$. We may assume $\tU$ is irreducible since $U$ is. Since $\tU \to U$ is \'etale, by Lemma \ref{RestrictedPetriTraceMap} in fact $U$ is also smooth at $(E_0, \Pi_0)$ (cf{.} \cite[Proposition 3.10]{BGMN}). Therefore, we may choose a one-parameter family $\{ (E_t, \Pi_t) : t \in T \}$ in $\tU$ such that $(E_{t_0}, \Pi_{t_0}) = (E_0, \Pi_0)$ while $(E_t, \Pi_t)$ belongs to $\tU'$ for generic $t \in T$. Since the bundles have Euler characteristic zero, for generic $t \in T$ there is exactly one choice of $\Lambda \in \cG_1|_{(E_t, \Pi_t)}$. Thus we obtain a section $T \backslash \{ 0 \} \to \cG_1$. As $\dim T = 1$, this section can be extended uniquely to $0$. We obtain thus a triple $(E_0, \Pi_0, \Lambda_0)$ where $\Lambda \subset H^0 (C, \Kc \otimes E_0^*)$ has dimension $r+1$. By Lemma \ref{RestrictedPetriTraceMap}, this triple is Petri trace injective. Hence
\[ (E_t, \Pi_t, \Lambda_t) \ = \ (E, H^0 (C, E), H^0 (C, \Kc \otimes E^*) \]
is Petri trace injective for generic $t \in T$. 
 Thus a general bundle $E$ represented in $\tU'$ is Petri trace injective.

Furthermore, by \cite[Theorem 3.1 (4)]{BBN}, a general $(E, \Pi) \in U$ is globally generated (not just generically). Thus we obtain:

\begin{proposition} A general element $E$ of the irreducible component $B \subseteq B^{r+1}_{r, r(g-1)}$ is Petri trace injective and globally generated with $h^0 (C, E) = r+1$. \label{BgoodExists} \end{proposition}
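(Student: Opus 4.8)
The plan is to assemble Proposition \ref{BgoodExists} from the ingredients developed above, essentially reading off the conclusion from the preceding constructions and invoking irreducibility to propagate a genericity statement. The three properties to verify for a general $E$ in the component $B$ are: (i) $h^0(C, E) = r+1$; (ii) $E$ is globally generated; and (iii) $E$ is Petri trace injective. The strategy is that each of these has already been established for at least a general element of a dominating family, so the work is to check that these loci are open (or that they contain a dense subset) and nonempty inside the single irreducible component $B$, so that their intersection is again dense.

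First I would record that $B$ is the component of $B^{r+1}_{r, r(g-1)}$ containing the image of the irreducible locus $U$, and recall from Lemma \ref{rPlusOne} that for generic $E$ represented in $U$ we have $h^0(C, E) = h^0(C, \Kc \otimes E^*) = r+1$; this gives property (i) on a dense open subset of $B$, identifying the general member of $B$ with the image of a general member of $U'$ (equivalently $\tU'$). Next, property (ii) is immediate from the cited \cite[Theorem 3.1 (4)]{BBN}, which guarantees that a general $(E, \Pi) \in U$ is genuinely globally generated; since $U$ dominates $B$ and global generatedness is open in families with constant $h^0$, this holds for a general $E$ in $B$.

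The substantive point is property (iii). Here I would appeal to the section-extension argument carried out just before the statement: starting from the bundle $E_0$ of Lemma \ref{CoboundarySurjective} with $h^0(C, E_0) = 2r+2$ and a generic $(r+1)$-dimensional subspace $\Pi_0$, one uses smoothness of $U$ at $(E_0, \Pi_0)$ (via Proposition \ref{RestrictedPetriTraceMap}) to produce a one-parameter family in $\tU$ whose generic member lies in $\tU'$, and then extends the associated section of the Grassmann bundle $\cG_1$ over the special point. The limit triple $(E_0, \Pi_0, \Lambda_0)$ is Petri trace injective by Proposition \ref{RestrictedPetriTraceMap}, and since Petri trace injectivity is an open condition (Remark \ref{ptiproperties}(4)), the generic triple $(E_t, \Pi_t, \Lambda_t) = (E, H^0(C,E), H^0(C, \Kc \otimes E^*))$ is Petri trace injective as well. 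As $\tU'$ is irreducible and maps to $B'$, this establishes Petri trace injectivity for a general $E$ in $B$.

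The \textbf{main obstacle} is conceptual rather than computational: the three properties are naturally verified for bundles carrying an auxiliary choice of subspace $\Pi$ (i.e.\ for coherent systems in $U$, $\tU$, or triples in $\cG_1$), whereas the statement concerns the bundle $E$ alone as a point of the Brill--Noether locus $B$. The care required is in transporting genericity across the maps $\tU' \cong \tB' \to B'$ and $\tU \to U \to B$, using the irreducibility of $U$ (from \cite[Theorem 3.1 (4) and Remark 6.2]{BBN}) together with the identifications $U' \cong B'$ noted above, so that a property holding on a dense subset of $\tU'$ descends to a property of the general member of $B$. Once these identifications are in place, the proof is simply the observation that a finite intersection of dense open (or open-dense) conditions on an irreducible variety is again dense, so a general $E \in B$ simultaneously satisfies (i), (ii) and (iii).
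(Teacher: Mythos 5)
Your proposal is correct and follows essentially the same route as the paper: the paper states Proposition \ref{BgoodExists} as the direct culmination of the preceding paragraphs, namely $h^0(C,E)=r+1$ from Lemma \ref{rPlusOne}, global generation from \cite[Theorem 3.1 (4)]{BBN}, and Petri trace injectivity via the one-parameter degeneration to $(E_0,\Pi_0)$, the extension of the section of the Grassmann bundle $\cG_1$ to the limit triple $(E_0,\Pi_0,\Lambda_0)$, and Proposition \ref{RestrictedPetriTraceMap} combined with openness of Petri trace injectivity. Your explicit attention to transporting genericity through $\tU'\cong\tB'\to B'$ and the irreducibility of $U$ only makes explicit what the paper leaves implicit.
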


Now we can prove the theorem:

\begin{proof}[Proof of Theorem \ref{MExists}] Consider the map $a \colon \SU_C(r) \times \Pic^{g-1}(C) \to \cU_C(r, r(g-1))$ given by $(V, M) \mapsto V \otimes M$. This is an \'etale cover of degree $r^{2g}$. We write $\bB$ for the inverse image $a^{-1} (B)$. 
Since $a$ is \'etale, we have $T_{(V, M)} \bB \ \cong \ T_{V \otimes M} B$ for each $(V, M) \in B$. In particular,
\begin{equation} \dim \bB \ = \ \dim B \ = \ \dim \cU_C(r, r(g-1)) - (r+1)^2 . \label{dimtB} \end{equation}

We write $p$ for the projection $\SU_C (r) \times \Pic^{g-1}(C) \to \SU_C(r)$, and $\bp$ for the restriction $p|_{\bB} \colon \bB \to \SU_C(r)$.\\
\\
\noindent \textbf{Claim:} $\bp$ is dominant.

To see this: For $(V, M) \in \bB$, the locus $\bp^{-1} (V)$ is identified with an open subset of the twisted Brill--Noether locus
\[ B^{r+1}_{1, g-1}(V) \ = \ \left\lbrace M \in \Pic^{g-1}(C) : h^0 (C, V \otimes M) \ge r+1 \right\rbrace \ \subseteq \ \Pic^{g-1}(C). \]
Moreover, for each such $(V, M)$, we have 
\[ \dim_M \left( \bp^{-1}(V) \right) \ = \ \dim \left( T_M \left( B^{r+1}_{1, g-1}(V) \right) \right) \ = \ \dim \Image ( \bar{\mu} )^{\perp}. \]
Since $V \otimes M$ is Petri trace injective, this dimension is $g - (r+1)^2$. By semicontinuity, a general fibre of $\bp$ has dimension at most $g - (r+1)^2$. Therefore, in view of (\ref{dimtB}), the image of $\bp$ has dimension at least
\[ \left( \dim \cU_C(r, r(g-1)) - (r+1)^2 \right) - \left( g - (r+1)^2 \right) \ = \ \dim \cU_C(r, r(g-1)) - g \ = \ \dim \SU_C(r) . \]
As $\SU_C(r)$ is irreducible, the claim follows.

Now we can finish the proof: Let $V \in \SU_C(r)$ be general. By the claim, we can find $(V, M) \in \tP$ such that $h^0 (C, V \otimes M) = r+1$ and $V \otimes M$ is globally generated and Petri trace injective. By Proposition \ref{ptithetagood}, the theta divisor $\Theta_V$ exists and satisfies $\mult_M \Theta_V = h^0 (C, V \otimes M) = r+1$. Lastly, by considering a suitable sum of line bundles, 
 one sees that the involution $E \mapsto \Kc \otimes E^*$ preserves the component $\bB$. Since a general element of $\bB$ is globally generated, in general both $V \otimes M$ and $\Kc \otimes M^{-1} \otimes V^*$ are globally generated. \end{proof}

\section{Reconstruction of bundles from tangent cones to theta divisors}
\label{reconstructionTangentCone}

\subsection{Tangent cones}

Let $Y$ be a normal variety and $Z \subset Y$ a divisor. 
Let $p$ be a smooth point of $Y$ which is a point of multiplicity $n \ge 1$ of $Z$. A local equation $f$ for $Z$ near $p$ has the form $f_n + f_{n+1} + \cdots$, where the $f_i$ are homogeneous polynomials of degree $i$ in local coordinates centred at $p$. The projectivised tangent cone $\cT_p (Z)$ to $Z$ at $p$ is the hypersurface in $\PP T_p Y$ defined by the first nonzero component $f_n$ of $f$. (For a more intrinsic description, see \cite[Chapter II.1]{ACGH}.)

Now let $C$ be a curve of genus $g\ge (r+1)^2$. Let $E$ be a Petri trace injective bundle of rank $r$ and degree $r(g-1)$, with $h^0 (C, E) = r+1$. By Proposition \ref{ptithetagood} (with $h = g-1$), the theta divisor $\Theta_E$ is defined and contains the origin $\Oc$ of $\Piczero$ with multiplicity $h^0(C,E) = r+1$.

By \cite[Theorem 3.4 and Remark 3.8]{CT} (see also Kempf \cite{K73}), the tangent cone $\cT_{\Oc} (\Theta_E)$ to $\Theta_E$ at $\Oc$ is given by the determinant of an $(r+1)\times (r+1)$ matrix $\Lambda = \begin{pmatrix} l_{ij} \end{pmatrix}$ of linear forms $l_{ij}$ on $H^1 (C, \Oc)$, which is related to the multiplication map $\bar{\mu}$ as follows: In appropriate bases $(s_i)$ and $(t_j)$ of $H^0(C,E)$ and $H^0(C,\Kc\otimes E^*)$ respectively, $\Lambda$ is given by 
$$
\left( l_{ij} \right) \ = \ \left( \bar{\mu}(s_i\otimes t_j) \right) .
$$
Hence, via Serre duality, $\Lambda$ coincides with the cup product map
$$
\cup \colon H^0(C,E) \otimes H^1(C,\Oc) \to H^1(C,E) .
$$
Thus the matrix $\Lambda= \left( l_{ij} \right)$ is a matrix of linear forms on the canonical space $|K_C|^*$.

In the following two subsections, we will show on the one hand that one can recover the bundle $E$ from the determinantal representation of the tangent cone $\cT_{\Oc}(\Theta_E)$ given by the matrix $\Lambda$. On the other hand, up to changing bases in $H^0(C, E)$ and $H^1 (C, E)$ there are only two determinantal representations of the tangent cone, namely $\Lambda$ or $\Lambda^t$. Thus the tangent cone determines $E$ up to an involution.

We will denote by $\varphi$ the canonical embedding $C \hookrightarrow |\Kc|^*$.

\subsection{Reconstruction of the bundle from the tangent cone} \label{reconstruction}

As above, let $\Lambda=\left(l_{ij}\right)$ be the determinantal representation of the tangent cone given by the cup product mapping. 
We identify the source of $\Lambda$ with $H^0(C,E)$ and the target with $H^1(C,E)$:
$$
H^0(C,E)\otimes \cO_{\PP}(-1) \stackrel{\Lambda}{\longrightarrow} H^1(C,E) \otimes \cO_{\PP}.
$$

We recall that the Serre duality isomorphism sends $b \in H^1 (C, E)$ to the linear form
\[ \cdot \cup b \colon H^0 (C, \Kc \otimes E^* ) \ \to \ H^1 (C, \Kc ) \ = \ \C . \]

In the following proofs, we will use principal parts in order to represent cohomology classes of certain bundles. We refer to \cite{K1} or \cite[\S 3.2]{Pau} for the necessary background. See also Kempf and Schreyer \cite{KS}.

\begin{lemma} Suppose that $h^0 (C, E) = r+1$ and $E$ and $\Kc \otimes E^*$ are globally generated. Then the rank of $\Lambda|_C = \varphi^* \Lambda$ is equal to $r=\rank \, E$ at all points of $C$. In particular, the canonical curve is contained in $\cT_{\Oc} ( \Theta_E )$. \label{rankr} \end{lemma}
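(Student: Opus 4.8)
The plan is to evaluate the cup-product matrix $\Lambda$ at a point $\varphi(p)$ of the canonical curve and to compute the rank of the resulting linear map $H^0(C,E) \to H^1(C,E)$ directly, using the representation of cohomology by principal parts. The first step is to identify the point $\varphi(p)$: under the isomorphism $|\Kc|^* = \PP\, H^1(C,\Oc)$, the image $\varphi(p)$ corresponds to the class $b_p \in H^1(C,\Oc)$ represented by a principal part with a simple pole of residue $1$ at $p$, in a local coordinate $z$ centred at $p$. Indeed, this class pairs with a holomorphic differential $\omega$ by the residue $\mathrm{Res}_p(\omega/z)$, which is precisely evaluation of $\omega$ at $p$, matching the canonical embedding.

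Next I would compute $\Lambda(\varphi(p)) = (\,\cdot \cup b_p)$. For $s \in H^0(C,E)$, the class $s \cup b_p$ is represented by the $E$-valued principal part $s/z$, whose polar part depends only on the value $s(p) \in E_p$. This exhibits a factorisation
\[ \cdot \cup b_p \ = \ \delta_p \circ \mathrm{ev}_p , \]
where $\mathrm{ev}_p \colon H^0(C,E) \to E_p$ is evaluation at $p$ and $\delta_p \colon E_p \to H^1(C,E)$, $v \mapsto [v/z]$, is the connecting homomorphism of the sequence $0 \to E \to E(p) \to E_p \to 0$.

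The two global generation hypotheses now enter. Since $E$ is globally generated, $\mathrm{ev}_p$ is surjective onto the $r$-dimensional fibre $E_p$, so $\rank \Lambda(\varphi(p)) = \rank \delta_p$; in particular the rank is at most $r < r+1$, whence $\det \Lambda$ vanishes along $\varphi(C)$ and the canonical curve lies in $\cT_{\Oc}(\Theta_E)$. It remains to prove that $\delta_p$ is injective. From the long exact sequence, $\delta_p$ is injective if and only if $h^0(C,E(p)) = h^0(C,E) = r+1$.

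The key computation, and the only real obstacle, is to pin down $h^0(C,E(p))$; this is exactly where global generation of $\Kc \otimes E^*$ is used. By Serre duality $h^1(C,E(p)) = h^0\!\left(C, (\Kc \otimes E^*)(-p)\right)$. Since $\Kc \otimes E^*$ is globally generated of rank $r$, evaluation at $p$ is surjective, so imposing vanishing at $p$ drops its $h^0(C,\Kc \otimes E^*) = h^1(C,E) = r+1$ sections by exactly $r$, giving $h^1(C,E(p)) = 1$. Combined with $\chi(E(p)) = \chi(E) + r = r$ (as $\chi(E)=0$), Riemann--Roch yields $h^0(C,E(p)) = r + 1 = h^0(C,E)$. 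Hence $\delta_p$ is injective, and therefore $\rank \Lambda|_C = r$ at every point of $C$, as required.
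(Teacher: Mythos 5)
Your proof is correct and follows essentially the same route as the paper: both identify $\varphi(p)$ with the class of a principal part with a simple pole at $p$ and compute the rank of cupping with that class, using global generation of $E$ to handle the evaluation map and global generation of $\Kc \otimes E^*$ (via Serre duality) to show the rank is exactly $r$. The only cosmetic difference is that where you package the second step as injectivity of the coboundary $E_p \to H^1(C,E)$ via a section count for $E(p)$, the paper argues directly that a section $s'$ with $s'(p) \neq 0$ and $[\beta_p] \cup s' = 0$ would force the values $t(p)$ of all $t \in H^0(C, \Kc \otimes E^*)$ into the hyperplane annihilating $s'(p)$.
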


\begin{proof} For each $p \in C$, write $\beta_p$ for a principal part with a simple pole supported at $p$. Then (see \cite{KS}) the cohomology class $\left[ \beta_p \right]$ is identified with the image of $p$ by $\varphi$. Therefore, at $p \in C$, the pullback $\Lambda|_C$ is identified with the cup product map
\[ \left[ \beta_p \right] \otimes s \ \mapsto \ \left[ \beta_p \right] \cup s . \]
The kernel of $\left[ \beta_p \right] \cup \cdot$ contains the subspace $H^0 (C, E(-p))$, which is one-dimensional since $E$ is globally generated and $h^0 (C, E) = r+1$. If $\Ker \left( \left[ \beta_p \right] \cup \cdot \right)$ has dimension greater than 1, then there is a section $s' \in H^0 (C, E)$ not vanishing at $p$ such that
\[ \left[ \beta_p \cdot s' \right] \ = \ \left[ \beta_p \right] \cup s' \ = \ 0 \ \in \ H^1 (C, E) . \]
By Serre duality, this means that
\[ \left[ \beta_p \cdot \langle s'(p) , t(p) \rangle \right] \]
is zero in $H^1 (C, \Kc)$ for all $t \in H^0 (C, \Kc \otimes E^*)$. Hence the values at $p$ of all global sections of $\Kc \otimes E^*$ belong to the hyperplane in $(\Kc \otimes E^*)|_p$ defined by contraction with the nonzero vector $s'(p) \in E|_p$. Thus $\Kc \otimes E^*$ is not globally generated, contrary to our hypothesis. \end{proof}

\begin{remark} Casalaina-Martin and Teixidor i Bigas in \cite[\S 6]{CT} prove more generally that if $E$ is a general vector bundle with $h^0 (C, E) > kr$, then the $k$th secant variety of the canonical image $\varphi(C)$ of $C$ is contained in $\cT_{\Oc}( \Theta_E)$. \end{remark}

\begin{proposition}
\label{recovery}
Let $E$ be a vector bundle with $h^0 (C, E) = r+1$, such that both $E$ and $\Kc \otimes E^*$ are globally generated. Then the image of $\Lambda|_C$ is isomorphic to $\Tc \otimes E$.
\end{proposition}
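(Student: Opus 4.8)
The plan is to untwist the map, realise it as a factorisation through the evaluation of global sections, and read off the image. Since the canonical embedding satisfies $\varphi^* \cO_{\PP}(1) = \Kc$, pulling back the displayed sheaf map gives a morphism of bundles on $C$
\[ \Lambda|_C \colon H^0(C,E) \otimes \Tc \ \longrightarrow \ H^1(C,E) \otimes \Oc , \]
and after tensoring by $\Kc$ it is equivalent to show that the image of
\[ \Lambda|_C \otimes \Kc \colon H^0(C,E) \otimes \Oc \ \longrightarrow \ H^1(C,E) \otimes \Kc \]
is isomorphic to $E$; the assertion for $\Lambda|_C$ then follows by tensoring back with $\Tc$.

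First I would identify the kernel. By the principal parts description used in the proof of Lemma \ref{rankr}, the fibre of $\Lambda|_C \otimes \Kc$ at $p$ sends $s \mapsto [\beta_p \cdot s]$, and the polar part of $\beta_p \cdot s$ depends only on the value $s(p) \in E|_p$; hence this fibre factors through evaluation at $p$, and its kernel is $H^0(C, E(-p)) = \{ s : s(p) = 0 \}$. Since $E$ is globally generated with $h^0(C,E) = r+1$, the evaluation map $\mathrm{ev} \colon H^0(C,E) \otimes \Oc \to E$ is surjective with kernel a line subbundle $M_E$ (in fact $M_E \cong (\det E)^{-1}$), and $M_E|_p = H^0(C, E(-p))$ for every $p$. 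By Lemma \ref{rankr} the map $\Lambda|_C \otimes \Kc$ has constant rank $r$, so its kernel is a line subbundle of $H^0(C,E) \otimes \Oc$ whose fibres coincide with those of $M_E$; therefore $\Ker(\Lambda|_C \otimes \Kc) = M_E$ as subsheaves.

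Consequently $\Lambda|_C \otimes \Kc$ vanishes on $M_E$, and by the universal property of the cokernel $E = (H^0(C,E) \otimes \Oc)/M_E$ it factors as $\psi \circ \mathrm{ev}$ for a unique sheaf morphism $\psi \colon E \to H^1(C,E) \otimes \Kc$. Because $\mathrm{ev}$ is fibrewise surjective, at each $p$ we get $\rank(\psi_p) = \rank(\Lambda|_C \otimes \Kc)_p = r = \rank E$ by Lemma \ref{rankr}, so $\psi$ is injective on every fibre, i.e.\ a subbundle inclusion. Hence $\Image(\Lambda|_C \otimes \Kc) = \psi(\Image(\mathrm{ev})) = \Image(\psi) \cong E$, and tensoring by $\Tc$ yields $\Image(\Lambda|_C) \cong \Tc \otimes E$, as claimed.

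The step I expect to be the main obstacle is the second one: verifying that the cup-product map genuinely factors through evaluation \emph{as a morphism of sheaves} (not merely fibre by fibre) and that its kernel is exactly $H^0(C, E(-p))$ with no jumping. This is precisely where the principal parts formalism underlying Lemma \ref{rankr} is needed, together with the global generation of both $E$ and $\Kc \otimes E^*$, which is what forces the constant rank $r$ and hence makes the kernel a genuine line subbundle equal to $M_E$.
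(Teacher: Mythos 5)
Your argument is correct, but it runs in the opposite direction to the paper's, so it is worth recording the difference. You realise $\Tc \otimes E$ as a \emph{quotient of the source}: after untwisting by $\Kc$ you identify the fibrewise kernel of the cup product at $p$ with $H^0(C, E(-p))$, use the constant rank $r$ from Lemma \ref{rankr} to promote this to an equality of subbundles $\Ker(\Lambda|_C \otimes \Kc) = \Ker(\mathrm{ev})$, and then factor through the evaluation quotient $E$ of $H^0(C,E) \otimes \Oc$, the induced map $\psi$ being fibrewise injective for rank reasons. The paper instead realises $\Tc \otimes E$ as a \emph{subbundle of the target}: it dualises the evaluation sequence of $\Kc \otimes E^*$, identifies $\Oc \otimes H^1(C,E)$ with $\Oc \otimes H^0(C, \Kc \otimes E^*)^*$ by Serre duality, observes that $[\beta_p] \cup s$ annihilates the unique section $t_p$ of $\Kc \otimes E^*$ vanishing at $p$ (because the principal part $\beta_p \cdot t_p$ is everywhere regular), so that $\Lambda|_C$ factors through $\Ker(e) \cong \Tc \otimes E$, and then concludes by the same rank count. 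The two arguments are dual and consume the same inputs, but distribute them differently: yours leans on global generation of $E$ (for the sequence $0 \to L^{-1} \to H^0(C,E)\otimes\Oc \to E \to 0$), the paper's on global generation of $\Kc \otimes E^*$; both enter anyway through Lemma \ref{rankr}. The paper's version has the small advantage of exhibiting the image as a concrete subsheaf of $\Oc \otimes H^0(C,\Kc\otimes E^*)^*$, which is what feeds the subsequent remark computing $\Coker(\Lambda|_C) \cong \Kc^r \otimes L^{-1}$; your version identifies the image only up to isomorphism, which is all Proposition \ref{recovery} asserts, and as a by-product makes the kernel computation $\Ker(\Lambda|_C) \cong \Tc \otimes L^{-1}$ of that remark immediate, since your $M_E$ is $(\det E)^{-1} = L^{-1}$.
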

\begin{proof}

As $\varphi^* \cO_{\PP^{g-1}}(-1) \cong \Tc$, the pullback $\varphi^*\Lambda = \Lambda|_C$ is a map
\[ \Lambda|_C \colon \Tc \otimes H^0 (C, E) \to \Oc \otimes H^1 (C, E) . \]

Write $L := \det (E)$, a line bundle of degree $r(g-1)$. Then $\det (\Kc \otimes E^*) = \Kc^r \otimes L^{-1}$. As $\Kc \otimes E^*$ is globally generated, the evaluation sequence
\[ 0 \to \Kc^{-r} \otimes L \to \Oc \otimes H^0 (C, \Kc \otimes E^*) \to \Kc \otimes E^* \to 0 \]
is exact. For each $p \in C$, the image of $\left( \Kc^{-r} \otimes L \right)|_p$ in $H^0 (C, \Kc \otimes E^*)$ is exactly $\C \cdot t_p$, where $t_p$ is the unique section, up to scalar, of $\Kc \otimes E^*$ vanishing at $p$.

Dualising, we obtain a diagram
\[ \xymatrix{ 0 \ar[r] & \Tc \otimes E \ar[r] & \Oc \otimes H^0 (C, \Kc \otimes E^*)^* \ar[r]^-{e} & \Kc^r \otimes L^{-1} \ar[r] & 0 \\
 & \Tc \otimes H^0 (E) \ar[r]^{\Lambda|_C} \ar[ur] & \Oc \otimes H^1 (C, E) \ar[u]^\wr_{\text{Serre}} & & } \]
Here $e_p$ can be identified up to scalar with the map $f \mapsto f( t_p )$ where $t_p$ is as above. 

Now for each $p \in C$, the image
\[ \left[ \beta_p \right] \cup H^0 (C, E) \ \subset \ H^1 (C, E) \ \cong \ H^0 (C, \Kc \otimes E^*)^* \]
annihilates $t_p \in H^0 (C, \Kc \otimes E^*)$, since the principal part $\beta_p \cdot t_p$ is everywhere regular. Therefore, $\Lambda|_C$ factorises via $\Ker (e) = \Tc \otimes E$. Since $\rank (\Lambda|_C) \equiv r$ by Lemma \ref{rankr}, we have $\Image ( \Lambda|_C ) \cong \Tc \otimes E$. \end{proof}

\begin{remark} 
A straightforward computation shows also that 
$$
\Ker (\Lambda|_C) \cong \Tc \otimes L^{-1} \text{ and } \Coker (\Lambda|_C) \cong \Kc^r \otimes L^{-1} .
$$ 
\end{remark}

We will also want to study the transpose $\Lambda^t$, which we will consider as a map
\[ \Lambda^t \colon \cO_{\PP}(-1) \otimes H^0(C, \Kc \otimes E^*) \ \to \ \cO_{\PP} \otimes H^1 (C, \Kc \otimes E^*) . \]
The proof of Proposition \ref{recovery} also shows:
\begin{corollary} \label{transpose} Let $E$ and $\Lambda$ be as above. Then the image of $\Lambda^t|_C$ is isomorphic to $E^*$. \end{corollary}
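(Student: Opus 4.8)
The plan is to recognise $\Lambda^t$ as the determinantal representation of the tangent cone attached to the bundle $E' := \Kc \otimes E^*$, and then apply Proposition \ref{recovery} to $E'$ in place of $E$. First I would check that $E'$ satisfies the hypotheses of that proposition. Since $E$ has slope $g-1$, so does $E' = \Kc \otimes E^*$, which therefore also has Euler characteristic zero; by Serre duality $h^0(C, E') = h^1(C, E) = h^0(C, E) = r+1$. The bundle $E'$ is globally generated by hypothesis, and $\Kc \otimes (E')^* = E$ is globally generated as well, so all the hypotheses of Proposition \ref{recovery} hold for $E'$.

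Next comes the crucial identification. The matrix $\Lambda = (l_{ij}) = (\bar{\mu}(s_i \otimes t_j))$ is built from the Petri trace map $\bar{\mu} \colon H^0(C,E) \otimes H^0(C, \Kc \otimes E^*) \to H^0(C,\Kc)$. Because the trace of a rank-one homomorphism is unchanged under transposition of the factors, the value $\bar{\mu}(s \otimes t)$ agrees with the value $\bar{\mu}'(t \otimes s)$ of the Petri trace map $\bar{\mu}'$ for $E'$ (noting that $\Kc \otimes (E')^* = E$, so the source of $\bar{\mu}'$ is $H^0(C, \Kc \otimes E^*) \otimes H^0(C, E)$). Hence $\Lambda^t = (\bar{\mu}'(t_j \otimes s_i))$ is precisely the matrix of linear forms representing the tangent cone $\cT_{\Oc}(\Theta_{E'})$, and via Serre duality it coincides with the cup product map $H^0(C, \Kc \otimes E^*) \otimes H^1(C, \Oc) \to H^1(C, \Kc \otimes E^*)$ for $E'$, viewed as a map of sheaves on $|\Kc|^*$ exactly as in the statement of the corollary. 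In other words, $\Lambda^t$ plays for $E'$ the same role that $\Lambda$ plays for $E$, which is just another way of saying that the proof of Proposition \ref{recovery} applies verbatim with $E$ replaced by $\Kc \otimes E^*$.

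Finally I would invoke the conclusion of Proposition \ref{recovery} for $E'$: pulling back along $\varphi$ and using $\varphi^* \cO_{\PP}(-1) \cong \Tc$, it gives $\Image(\Lambda^t|_C) \cong \Tc \otimes E'$. Since $\Tc \otimes \Kc \cong \Oc$, we obtain $\Tc \otimes E' = \Tc \otimes \Kc \otimes E^* \cong E^*$, which is the assertion. The only genuine subtlety, and the step I would take care to spell out, is the bookkeeping in the transpose identification: one must check that transposing $\Lambda$ interchanges the two Serre-dual isomorphisms $H^1(C,E) \cong H^0(C, \Kc \otimes E^*)^*$ and $H^1(C, \Kc \otimes E^*) \cong H^0(C,E)^*$ compatibly with the symmetry $\bar{\mu}(s \otimes t) = \bar{\mu}'(t \otimes s)$, so that $\Lambda^t$ really is the cup-product representation for $E'$ rather than merely its abstract transpose. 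Once this compatibility is confirmed, the corollary is an immediate instance of Proposition \ref{recovery}.
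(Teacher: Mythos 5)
Your proposal is correct and follows exactly the route the paper intends: the paper's ``proof'' of Corollary \ref{transpose} is simply the remark that the argument of Proposition \ref{recovery} applies verbatim with $E$ replaced by $\Kc \otimes E^*$, which is precisely your identification of $\Lambda^t$ as the cup-product representation for $E' = \Kc \otimes E^*$ followed by $\Tc \otimes E' \cong E^*$. Your verification of the hypotheses for $E'$ and the symmetry $\bar{\mu}(s \otimes t) = \bar{\mu}'(t \otimes s)$ supplies the details the paper leaves implicit.
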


\begin{remark} In order to describe the cokernel of $\Lambda|_C$, it is also enough to know in which points of $C$ a row of $\Lambda|_C$ vanishes. Dualising the sequence
\[ 0 \to \Kc^r \otimes L^{-1} \to \Oc \otimes H^0 (C, \Kc \otimes E^*) \to \Kc \otimes E^* \to 0, \]
we see that $H^0 (C, \Kc \otimes E^*)^*$ is canonically identified with a subspace of $H^0(C,\Kc^r \otimes L^{-1})$. Using the description of
\[ \Tc \otimes H^0 (C, E) \ \stackrel{\Lambda|_C}{\longrightarrow} \ \Oc \otimes H^1 (C, E) \ \stackrel{\sim}{\longrightarrow} \ \Oc \otimes H^0 (C,\Kc\otimes E^*)^* \]
as in the above proof, one sees that a row vanishes exactly in a divisor associated to $\Kc^r \otimes L^{-1}$. Hence, the cokernel is isomorphic to $\Kc^r \otimes L^{-1}$. \end{remark}

\subsection{Uniqueness of the determinantal representation of the tangent cone}

In order to show the desired uniqueness of the determinantal representation of the tangent cone, we use a classical result of Frobenius. See \cite{Fro} and also for a modern proof \cite{Dieu49}, \cite{W87} and the references there. For the sake of completeness we will give a sketch of a proof following Frobenius.

\begin{proposition}\label{complexes} 
Suppose $r \ge 1$. Let $A$ and $B$ be $(r+1)\times (r+1)$ matrices of independent linear forms, such that the entries of $A$ are linear combinations of the entries of $B$ and $\det(A)=c\cdot\det(B)$ for a nonzero constant $k \in \C$. Then, there exist invertible matrices $S,T\in Gl(r+1,\C)$, unique up to scalar, such that $A=S \cdot B\cdot T$ or $A=S \cdot B^t\cdot T$.
\end{proposition}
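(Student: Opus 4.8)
The plan is to prove this via the theory of linear systems of matrices, following Frobenius's classical approach. Let me sketch the structure.

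\bigskip

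The plan is to reduce the statement to an assertion about \emph{matrix pencils} and the classification of linear maps $\C^{r+1} \to \C^{r+1}$ preserving the determinant. First, I observe that since the entries of $A$ are linear combinations of the entries of $B$, we may regard both $A$ and $B$ as parametrising linear subspaces of $\mathrm{Mat}_{r+1}(\C)$. Writing $A = \sum_k x_k A_k$ and $B = \sum_k x_k B_k$ for fixed matrices $A_k, B_k \in \mathrm{Mat}_{r+1}(\C)$ (where $x_1, \ldots, x_m$ are the independent linear forms), the hypothesis that the entries of $A$ are linear combinations of those of $B$ means there is a linear substitution on the coefficients; combined with both sets being independent of the same cardinality, this says $A$ and $B$ span the \emph{same} linear subspace $W \subseteq \mathrm{Mat}_{r+1}(\C)$ after a change of the linear forms, so it suffices to treat the case where $A$ and $B$ have the same span and $\det A = k \det B$ as polynomials.

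\bigskip

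The core step is Frobenius's theorem on determinant-preserving linear maps: any linear automorphism $\Phi$ of $\mathrm{Mat}_{r+1}(\C)$ satisfying $\det(\Phi(X)) = \det(X)$ for all $X$ has the form $\Phi(X) = S X T$ or $\Phi(X) = S X^t T$ for fixed $S, T$ with $\det(S)\det(T) = 1$. I would prove this by the following chain of reductions. First, normalise so that $\Phi(\Iden) = \Iden$ (achievable by composing with left/right multiplication). Then $\Phi$ preserves the characteristic polynomial, hence sends idempotents to idempotents of the same rank and preserves the lattice of ranks; in particular $\Phi$ preserves the set of rank-one matrices. A linear map preserving rank-one matrices must, by a dimension/incidence argument on the Segre variety of rank-one matrices, either preserve or swap the two rulings (the ``left'' spaces $v \mapsto v w^t$ with $v$ fixed and the ``right'' spaces with $w$ fixed). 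This dichotomy is exactly what produces the two cases $S X T$ versus $S X^t T$: preserving the rulings forces $\Phi(vw^t) = (Sv)(w^t T)$, giving $\Phi(X) = SXT$; swapping them forces the transpose. Feeding this back, the fact that $A$ and $B$ span the same subspace with proportional determinants means the identity substitution on the common span extends to such a $\Phi$, yielding $A = S B T$ or $A = S B^t T$.

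\bigskip

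For the uniqueness of $S$ and $T$ up to scalar, I would use that the determinantal hypersurface $\det B = 0$ contains enough rank-one (and higher corank) strata to pin down the maps: if $S_1 B T_1 = S_2 B T_2$ for all values of the forms, then $S := S_2^{-1} S_1$ and $T := T_1 T_2^{-1}$ satisfy $S B = B T^{-1}$ for every matrix $B$ in the span, and since the span is large enough (it contains a basis-like collection coming from the independent linear forms), Schur's lemma type reasoning forces $S$ and $T$ to be scalar. The main obstacle I anticipate is the rank-one preservation step: proving rigorously that a determinant-preserving linear map must send rank-one matrices to rank-one matrices, and then that the rulings of the Segre variety are either preserved or interchanged, requires care — this is precisely the geometric heart of Frobenius's theorem. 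Since the paper only asks for a sketch following Frobenius, I would state the rank-one preservation and ruling dichotomy as the key lemmas and indicate the incidence-geometry argument, citing \cite{Dieu49} and \cite{W87} for the full modern treatment rather than reproducing every detail.
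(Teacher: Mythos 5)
Your proposal is correct in outline but follows a genuinely different route from the paper. You reduce the statement to the classification of linear automorphisms $\Phi$ of $\mathrm{Mat}_{r+1}(\C)$ satisfying $\det(\Phi(X))=c\cdot\det(X)$, and then classify these via the geometry of the rank-one locus (the Segre cone) and its two rulings --- this is the modern approach of \cite{Dieu49} and \cite{W87}, which the paper cites precisely as alternative references. The paper instead reproduces Frobenius's original computation from \cite{Fro}: it extracts the coefficient matrix $\left(c^l_{ij}\right)$ of each diagonal entry $b_{ll}$, shows each has rank one by noting that the coefficient of $y^2$ in $\det\left(a_{ij}+y\cdot c^l_{ij}\right)$ must vanish and that there are no relations among the minors of $A$, assembles the resulting vectors into invertible matrices $P$ and $Q$ with $A_0=PB_0Q$ on the diagonal locus, and then pins down $\widetilde{B}=P^{-1}AQ^{-1}$ by comparing coefficients of monomials in $\det(\widetilde{B})=\det(B)$. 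Both arguments hinge on the same phenomenon --- directions along which the determinant stays linear are rank-one directions --- but the paper needs it only for the $r+1$ diagonal directions and finishes by explicit coefficient comparison, whereas you need it for all rank-one matrices and finish by the incidence geometry of the rulings. Your uniqueness argument (reduce to a matrix commuting with all of $\mathrm{Mat}_{r+1}(\C)$, hence scalar) is essentially identical to the paper's.

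One step of your sketch does not work as written: preserving the characteristic polynomial does not imply sending idempotents to idempotents, nor does it control rank (a nilpotent matrix of any rank up to $r$ has the same characteristic polynomial as the zero matrix). The standard repair, consistent with your strategy, is the observation that $N$ has rank at most one if and only if $\det(X+tN)$ has degree at most one in $t$ for every $X$, a condition manifestly preserved by a determinant preserver. You do flag rank-one preservation as the delicate point and defer to \cite{Dieu49} and \cite{W87}, which is acceptable for a sketch, but the characteristic-polynomial/idempotent chain should be replaced by this argument.
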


\begin{proof}[Proof by Frobenius {\cite[pages 1011-1013]{Fro}}] 
 Note that for $r\geq 1$ only one of the above cases can occur and the matrices $S$ and $T$ are unique up to scalar. Indeed, let $A=SBT=S'BT'$ and set $b_{ii}=1$ and $b_{ij}=0$ if $i\neq j$, then $ST=S'T'$. Set $U=T(T'^{-1})=S(S'^{-1})$, thus $UB=BU$. Since $U$ commutes with every matrix, we have $U=k\cdot E_r$ and hence 
 $S'=k\cdot S$ and $T'=\frac{1}{k}\cdot T$. Similar one can show that $B^t$ is not equivalent to $B$. 
 Note also that there is no relation between any minors of $A$ or $B$.
 
 For $l=0,\dots, r$, let $c^{l}_{ij}$ be the coefficient of $b_{ll}$ in $a_{ij}$ and let $y$ be a new variable. We substitute $b_{ll}$ with $b_{ll}+y$ in $A$ and $B$ and get new matrices, denoted by $\left(a_{ij}+y\cdot c^{l}_{ij}\right)$ and $B^l$, respectively. Since $\det B^l$ is linear in $y$, the coefficient of $y^2$ in $\det \left(a_{ij}+y\cdot c^{l}_{ij}\right)=\det B^l$ has to vanish. But the coefficient is the sum of products of $2\times  2$ minors of $\left(c^{l}_{ij}\right)$ and $(r-1)\times (r-1)$ minors of $A$. Since there are no relations between any minors of $A$, all $2\times  2$ minors of $\left(c^{l}_{ij}\right)$ vanish. Hence, $\left(c^{l}_{ij}\right)$ has rank one for any $l$ and we can write $c^{l}_{ij}=p^{l}_iq^{l}_j$ where $p^{l}$ and $q^{l}$ are column and row vectors, respectively. 
 
 Let $B_0=B|_{\{b_{ij}=0, i\neq j\}}$ and $A_0=A|_{\{b_{ij}=0, i\neq j\}}$. Then 
 $$
 A_0=PB_0 Q
 $$
 where $P=\left(p^{l}_i\right)_{0\leq i,l\leq r}$ and $Q=\left(q^{l}_j\right)_{0\leq l,j\leq r}$. Since $\det(A_0) = c\cdot \det (B_0) = c\cdot b_{00}\cdot ... \cdot b_{rr}$, we get $\det(P)\cdot \det(Q)=c$, hence $P$ and $Q$ are invertible.  
 
 Let $\widetilde{B}=P^{-1}AQ^{-1}$. By definition $\widetilde{B}|_{\{b_{ij}=0, i\neq j\}}=B_0$. Thus, the entries $\widetilde{b_{ij}}$ for $i\neq j$ and $v_i=\widetilde{b_{ii}}-b_{ii}$ are linear function in $b_{ij}$ for $i\neq j$. Furthermore, we have 
 $$\det(\widetilde{B})= \det(P^{-1}AQ^{-1}) = \det(P^{-1}Q^{-1})\cdot \det(A) = \frac{1}{c}\cdot c\cdot \det(B)= \det(B).$$
 Comparing the coefficient of $b_{11}b_{22}\cdots b_{rr}$ in $\det(\widetilde{B})$ and $\det(B)$, we get $v_0=0$. Similarly, $v_i=0$ for $0\leq i \leq r$. Comparing the coefficients of $b_{22}\cdots b_{rr}$, we get $b_{12}b_{21}=\widetilde{b_{12}}\widetilde{b_{21}}$ and in general 
 $$
 b_{ij}b_{ji}=\widetilde{b_{ij}}\widetilde{b_{ji}},\ i\neq j. 
 $$
 Comparing the coefficients of $b_{33}\cdots b_{rr}$, we get $b_{12}b_{23}b_{31}+b_{21}b_{13}b_{32}=\widetilde{b_{12}}\widetilde{b_{23}}\widetilde{b_{31}}+\widetilde{b_{21}}\widetilde{b_{13}}\widetilde{b_{32}}$ and in general 
 $$
 b_{ij}b_{jk}b_{ki}+b_{ji}b_{ik}b_{kj}=\widetilde{b_{ij}}\widetilde{b_{jk}}\widetilde{b_{ki}}+\widetilde{b_{ji}}\widetilde{b_{ik}}\widetilde{b_{kj}},\ i\neq j\neq k\neq i.
 $$ 
 A careful study of these equations shows that either 
 $$
 \widetilde{b_{ij}}=\frac{k_i}{k_j} b_{ij} \text{ and } \widetilde{B}=K B K^{-1} 
 \text{ or }
  \widetilde{b_{ij}}=\frac{k_i}{k_j} b_{ji} \text{ and } \widetilde{B}=K B^t K^{-1}
 $$
 where $K=\left(k_i \delta_{ij}\right)_{0\le i,j \le r}$. The claim follows. 
\end{proof}

We now assume that $E$ is a Petri trace injective bundle. Let $\Lambda=\left(l_{ij}\right)$ be a determinantal representation of the tangent cone $\cT_{\Oc}(\Theta_E)$ as above. By Petri trace injectivity, the matrix $\Lambda$ is $(r+1)$-generic (see \cite{Eis88} for a definition), that is, there are no relations between the entries $l_{ij}$ or any subminors of $\Lambda$. 

\begin{corollary}
\label{uniqueDetRepresentation}
 For a curve of genus $g\ge (r+1)^2$ and a Petri trace injective bundle $E$ with $r+1$ global sections of degree $r(g-1)$, any determinantal representation of the tangent cone $\cT_{\Oc}(\Theta_E)\subset |\Kc|^*$ is equivalent to $\Lambda$ or $\Lambda^t$.
\end{corollary}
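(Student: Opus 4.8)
The plan is to deduce the statement from Frobenius' theorem (Proposition \ref{complexes}), taking $B = \Lambda$ and $A = \Lambda'$ an arbitrary determinantal representation of the tangent cone $\cT_{\Oc}(\Theta_E) \subset |\Kc|^*$. Since $\Lambda$ and $\Lambda'$ cut out the same hypersurface of degree $r+1$, their determinants agree up to a nonzero scalar, $\det(\Lambda') = c \cdot \det(\Lambda)$, which supplies the third hypothesis of the proposition. It then remains to verify the other two: that the entries of $\Lambda'$ are independent linear forms, and that each of them is a linear combination of the entries $l_{ij}$ of $\Lambda$.

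First I would fix the ambient spaces. Writing $U := H^1(C,\Oc)$, the linear forms on $|\Kc|^* = \PP(U)$ are exactly the elements of $U^* = H^0(C,\Kc)$. Let $W \subseteq H^0(C,\Kc)$ be the span of the $l_{ij}$ and $W' \subseteq H^0(C,\Kc)$ the span of the entries of $\Lambda'$. Because $E$ is Petri trace injective and $g \ge (r+1)^2$, the matrix $\Lambda$ is $(r+1)$-generic, so the $l_{ij}$ are linearly independent and $\dim W = (r+1)^2$; note also that $\dim W' \le (r+1)^2$ automatically, since $\Lambda'$ has only $(r+1)^2$ entries.

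The crux is to show $W = W'$, and I would do this through the intrinsic space of \emph{essential variables} of the form $F := \det(\Lambda)$, that is, the minimal subspace $W_0 \subseteq H^0(C,\Kc)$ with $F \in \mathrm{Sym}^{r+1}(W_0)$. This space is characterised by $W_0 = I^{\perp}$, where $I = \{ u \in U : D_u F = 0 \text{ identically} \}$ is the space of translational invariances of $F$. Since $F \in \mathrm{Sym}^{r+1}(W)$ we have $W_0 \subseteq W$, and likewise $W_0 \subseteq W'$ because $F = c^{-1}\det(\Lambda') \in \mathrm{Sym}^{r+1}(W')$. To identify $W_0$ with $W$, observe that for $u \in U$, with $C := (l_{ij}(u))$ the associated constant matrix, one has $D_u F = \left. \frac{d}{dt}\det(\Lambda + tC)\right|_{t=0} = \sum_{i,j} l_{ij}(u)\,\mathrm{Cof}_{ij}(\Lambda)$, where $\mathrm{Cof}_{ij}$ denotes the signed $r \times r$ minor of $\Lambda$ obtained by deleting row $i$ and column $j$. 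The $(r+1)$-genericity of $\Lambda$ guarantees that these $r \times r$ minors are linearly independent, so $D_u F = 0$ forces $l_{ij}(u) = 0$ for all $i,j$, i.e. $u \in W^{\perp}$. Hence $I = W^{\perp}$ and $W_0 = W$. Combining $W = W_0 \subseteq W'$ with $\dim W' \le (r+1)^2 = \dim W$ gives $W' = W$.

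Once $W' = W$ is in hand, the entries of $\Lambda'$ are linear combinations of the $l_{ij}$ (the second hypothesis), and, spanning a space of dimension $(r+1)^2$ with exactly $(r+1)^2$ elements, they are linearly independent (the first hypothesis). Proposition \ref{complexes} then yields invertible $S, T$, unique up to scalar, with $\Lambda' = S\Lambda T$ or $\Lambda' = S\Lambda^{t} T$, which is the assertion of the corollary. I expect the main obstacle to be precisely the identification $W = W_0$, that is, showing every determinantal representation of the tangent cone uses the same span of linear forms; this is exactly where $(r+1)$-genericity — and thus Petri trace injectivity together with the bound $g \ge (r+1)^2$ — is indispensable, entering through the linear independence of the maximal-minus-one minors of $\Lambda$.
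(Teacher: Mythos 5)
Your proof is correct and follows essentially the same route as the paper: both reduce the statement to Frobenius' Proposition \ref{complexes} by showing that the linear span of the entries of an arbitrary determinantal representation must coincide with the span of the $l_{ij}$, the key point being that the vertex of the cone $\cT_{\Oc}(\Theta_E)$ (your $\PP(I)$, the space of translational invariances) has codimension exactly $(r+1)^2$. If anything, your pinpointing of where $(r+1)$-genericity enters --- via Jacobi's formula $D_uF=\sum_{i,j} l_{ij}(u)\,\mathrm{Cof}_{ij}(\Lambda)$ and the linear independence of the $r\times r$ minors --- is more explicit than the paper's appeal to the independence of the $l_{ij}$ alone, and fills in a detail the published proof leaves implicit.
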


\begin{proof}
 Let $\alpha$ be any determinantal representation of the tangent cone $\cT_{\Oc}(\Theta_E)$ in $|\Kc|^*$. Then, $\alpha$ is an $(r+1)\times (r+1)$ matrix of linear entries, since the degree of the tangent cone is $r+1$. Furthermore, the entries $\alpha_{ij}$ of $\alpha$ are linear combinations of the entries $l_{ij}$ of $\Lambda$. Indeed, assume for some $k,l$ that $\alpha_{kl}$ is not a linear combination of the $l_{ij}$. Then, $\cT_{\Oc}(\Theta_E)$ would be the cone over $V(\alpha_{kl})\cap \cT_{\Oc}(\Theta_E)$. Hence, the vertex of $\cT_{\Oc}(\Theta_E)$ defined by the entries $l_{ij}$ would have codimension strictly less than $(r+1)^2$; a contradiction to the independence of the $l_{ij}$. The corollary follows from Proposition \ref{complexes}.
\end{proof}

\section{Injectivity of the theta map}

\begin{theorem} Suppose $r \ge 3$. Let $C$ be a Petri general curve of genus $g \ge (2r+2)(2r+1)$. Then the theta map $\cD \colon \SU_C(r) \dashrightarrow |r \Theta|$ is generically injective. \label{Thetageninj} \end{theorem}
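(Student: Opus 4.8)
The plan is to prove generic injectivity in the usual sense: to exhibit a dense open $U \subseteq \SU_C(r)$ on which $\cD$ is one-to-one. It therefore suffices to take two general bundles $V, V' \in U$ with $\cD(V) = \cD(V')$, i.e.\ $\Theta_V = \Theta_{V'}$ as divisors, and to deduce $V \cong V'$. I would take $U$ to be the (dense) locus where the conclusion of Theorem \ref{MExists} holds; since the reconstruction in \S\ref{reconstruction} needs only $g \ge (r+1)^2$, which is implied by $g \ge (2r+2)(2r+1)$, all the tools of \S\ref{reconstructionTangentCone} are available on $U$.

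For such a $V$, fix $M \in \Theta_V$ as in Theorem \ref{MExists}: $h^0(C, V \otimes M) = r+1$, and $E := V \otimes M$ and $\Kc \otimes E^*$ are globally generated and Petri trace injective. By Proposition \ref{ptithetagood} we have $\mult_M \Theta_V = r+1$, and since $\Theta_{V'} = \Theta_V$ also $\mult_M \Theta_{V'} = r+1$. Thus $T := \cT_M(\Theta_V) = \cT_M(\Theta_{V'})$ is a single determinantal hypersurface of degree $r+1$ in $|\Kc|^*$, carrying the $(r+1)$-generic representation $\Lambda$ built from the cup product for $E$. By Corollary \ref{uniqueDetRepresentation} any determinantal representation of $T$ is equivalent to $\Lambda$ or $\Lambda^t$, and by Proposition \ref{recovery} and Corollary \ref{transpose} these recover $\Tc \otimes E$ and $E^*$ respectively. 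Hence $T$ together with the canonical curve $\varphi(C)$ determines $E = V \otimes M$ up to the involution $E \mapsto \Kc \otimes E^*$.

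Granting that $M$ is equally good for $V'$, the same recovery applied to $E' := V' \otimes M$ gives $E' \cong E$ or $E' \cong \Kc \otimes E^*$. In the first case $V' \otimes M \cong V \otimes M$, so $V' \cong V$. In the second case $V' \cong \Kc \otimes V^* \otimes M^{-2}$, of determinant $\Kc^r \otimes M^{-2r}$; since $V' \in \SU_C(r)$ forces $\det V' = \Oc$, this can happen only when $M^{2r} \cong \Kc^r$. The locus of such $M$ is finite, whereas the good points of Theorem \ref{MExists} move in a family of dimension $\dim \Image(\bar{\mu})^{\perp} = g - (r+1)^2 = (r+1)(3r+1) > 0$. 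Choosing $M$ off this torsion locus therefore excludes the involution and yields $V \cong V'$.

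The hard part is precisely the bracketed hypothesis, that the point $M$ chosen good for $V$ is also good for $V'$, so that \S\ref{reconstruction} applies to $E' = V' \otimes M$. One half is cheap: by \cite{CT} the tangent cone of $\Theta_{V'}$ at $M$ is the determinantal hypersurface cut out by the cup product matrix of $E'$, whose size is $h^0(C, E')$; comparing its degree with $\mult_M \Theta_{V'} = r+1$ forces $h^0(C, V' \otimes M) = r+1$ and the Petri trace injectivity of $E'$ (equivalently, $\Lambda' \sim \Lambda$ or $\Lambda^t$). The genuinely delicate point is the global generation of $E'$ and of $\Kc \otimes E'^*$, which Proposition \ref{recovery} uses but which $\Theta_{V'} = \Theta_V$ does not obviously force. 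I would address this by passing to the intrinsic locus $S = \{ M : \mult_M \Theta = r+1 \}$ of the common divisor $\Theta = \Theta_V = \Theta_{V'}$: the good loci of $V$ and of $V'$ are each dense in $S$ (global generation being open and holding generically by Theorem \ref{MExists}), so a general $M \in S$, chosen also off the finite locus $M^{2r} \cong \Kc^r$, is simultaneously good for $V$ and for $V'$, at which point the argument above gives $V \cong V'$.
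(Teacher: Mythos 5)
Your overall strategy --- recover $V$ from the tangent cone of $\Theta_V$ at a good point $M$ via Corollary \ref{uniqueDetRepresentation}, Proposition \ref{recovery} and Corollary \ref{transpose}, then use $\det V = \Oc$ together with $M^{2r} \not\cong \Kc^r$ to kill the transpose ambiguity --- is exactly the paper's. The paper's own proof never introduces a second bundle: it reconstructs $V$ from $(\cT_M(\Theta_V), M, \det V = \Oc)$ and concludes ``in particular, $\Theta_V$ determines $V$.'' You are right that deducing injectivity requires comparing two bundles $V, V'$ with the same theta divisor, and hence a point $M$ that is simultaneously good for both; isolating this is a genuine gain in precision over the text. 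But your resolution of that point is where the argument breaks down.

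First, the ``cheap half'' is circular. The result of \cite{CT} gives only the inequality $\mult_M \Theta_{V'} \ge h^0(C, V' \otimes M)$, with equality precisely when the determinant of the $h^0 \times h^0$ cup-product matrix of $V' \otimes M$ is not identically zero; if that determinant vanishes identically, the multiplicity strictly exceeds $h^0$ and the tangent cone is \emph{not} cut out by it. So $\mult_M \Theta_{V'} = r+1$ yields only $h^0(C, V' \otimes M) \le r+1$, and ``comparing degrees'' presupposes the equality case you are trying to establish; the Petri trace injectivity of $E'$ is likewise only obtained conditionally on $h^0 = r+1$. Second, the ``delicate half'' asserts that the good loci of $V$ and $V'$ are each \emph{dense} in $S = \{M : \mult_M \Theta = r+1\}$. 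Theorem \ref{MExists} gives only a nonempty open good locus inside $B^{r+1}_{1,g-1}(V)$, which is a different (and possibly reducible) set from $S$; nothing rules out that the good locus of $V$ is dense in one component of $S$ while that of $V'$ is dense in another, in which case no common good point exists. To close the argument one needs a genuine statement that, for general $V, V'$ with $\Theta_V = \Theta_{V'}$, some multiplicity-$(r+1)$ point is good for both --- e.g.\ by controlling the whole locus of multiplicity-$(r+1)$ points of $\Theta_V$ for general $V$ --- and neither your proposal nor, admittedly, the paper's own two-line conclusion supplies this.
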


\begin{proof}

Let $V \in \SU_C (r)$ be a general stable bundle. By Theorem \ref{MExists}, there exists $M \in \Theta_V$ such that $h^0 (C, V \otimes M) = r+1$, the bundle $V \otimes M =: E$ is Petri trace injective, and $E$ and $\Kc \otimes E^*$ are globally generated.

Note that tensor product by $M^{-1}$ defines an isomorphism $\Theta_V \isom \Theta_E$ inducing an isomorphism $\cT_{M} ( \Theta_V ) \isom \cT_{\Oc} ( \Theta_E )$. In order to use the results of the previous sections, we will work with $\Theta_E$.

Now let
$$
\alpha \colon \cO_{\PP^{g-1}}(-1) \otimes \C^{r+1} \to \cO_{\PP^{g-1}} \otimes \C^{r+1}
$$ 
be a map of bundles of rank $r+1$ over $\PP^{g-1}$ whose determinant defines the tangent cone $\cT_{\Oc} (\Theta_E)$. 
 By Corollary \ref{uniqueDetRepresentation}, the map $\alpha$ is equivalent either to $\Lambda$ or $\Lambda^t$, where $\Lambda$ is the representation given by the cup product mapping as defined in \S \ref{reconstructionTangentCone}. Therefore, by Proposition \ref{recovery} and Corollary \ref{transpose}, the image $E'$ of $\alpha|_C$ is isomorphic either to $\Tc \otimes E = V \otimes M \otimes \Tc$ or to $E^* = V^* \otimes M^{-1}$. Thus $V$ is isomorphic either to
\begin{equation} E' \otimes \Kc \otimes M^{-1} \quad \quad \hbox{ or to } \quad \quad (E')^* \otimes M^{-1} . \label{twoposs} \end{equation}
Now since in particular $g > (r+1)^2$, the open subset $\{ M \in \Pic^{g-1}(C) : h^0 (C, V \otimes M) = r+1 \} \subseteq B^{r+1}_{1, g-1}(V)$ has a component of dimension $g - (r+1)^2 \ge 1$. Therefore, we may assume that $M^{2r} \not \cong \Kc^r$. Thus only one of the bundles in (\ref{twoposs}) can have trivial determinant. Hence there is only one possibility for $V$.

In summary, the data of the tangent cone $\cT_M ( \Theta_V )$ and the point $M$, together with the property $\det (V) = \Oc$, determine the bundle $V$ up to isomorphism. In particular, $\Theta_V$ determines $V$. \end{proof}

\begin{remark} The involution $M \mapsto \Kc \otimes M^{-1}$ defines an isomorphism of varieties $\Theta_V \isom \Theta_{V^*}$. We observe that the transposed map $\Lambda^t$ occurs naturally as the cup product map defining the tangent cone $\cT_{\Kc \otimes M^{-1}} \left( \Theta_{V^*} \right)$. \end{remark}

\begin{remark} \label{failure} If $C$ is hyperelliptic, then the canonical map factorises via the hyperelliptic involution $\iota$. Thus the construction in \S \ref{recovery} can never give bundles over $C$ which are not $\iota$-invariant. We note that Beauville \cite{Bea88} showed that in rank $2$, if $C$ is hyperelliptic then the bundles $V$ and $\iota^* V$ have the same theta divisor. \end{remark}


\begin{thebibliography}{CMTiB11}

\bibitem[ACGH85]{ACGH}
Enrico Arbarello, Maurizio Cornalba, Phillip~A. Griffiths, and Joseph Harris.
\newblock {\em Geometry of algebraic curves. {V}ol. {I}}, volume 267 of {\em
  Grundlehren der Mathematischen Wissenschaften [Fundamental Principles of
  Mathematical Sciences]}.
\newblock Springer-Verlag, New York, 1985.

\bibitem[BBN08]{BBN}
Usha~N. Bhosle, Leticia Brambila-Paz, and Peter~E. Newstead.
\newblock On coherent systems of type {$(n,d,n+1)$} on {P}etri curves.
\newblock {\em Manuscripta Math.}, 126(4):409--441, 2008.

\bibitem[BBN15]{BBN2}
Usha~N. Bhosle, Leticia Brambila-Paz, and Peter~E. Newstead.
\newblock On linear series and a conjecture of D. C. Butler.
\newblock {\em Int. J. Math.}, 26(2), 18 pp.

\bibitem[{Bea}88]{Bea88}
Arnaud {Beauville}.
\newblock {Fibr\'es de rang 2 sur une courbe, fibr\'e d\'eterminant et
  fonctions th\^eta. (Rank 2 fiber bundles on a curve, determinant bundle and
  theta functions).}
\newblock {\em {Bull. Soc. Math. Fr.}}, 116(4):431--448, 1988.

\bibitem[{Bea}06]{B06}
Arnaud {Beauville}.
\newblock {Vector bundles on curves and theta functions.}
\newblock In {\em {Moduli spaces and arithmetic geometry. Papers of the 13th
  International Research Institute of the Mathematical Society of Japan, Kyoto,
  Japan, September 8--15, 2004}}, pages 145--156. Tokyo: Mathematical Society
  of Japan, 2006.

\bibitem[{BGMN}03]{BGMN}
Steven B. {Bradlow}, Oscar {Garc\'{\i}a-Prada}, Vicente {Mu\~noz}, Peter E. {Newstead}.
\newblock{Coherent systems and Brill-Noether theory.}
\newblock{Int. J. Math. \textbf{14}, No. 7, 683-733 2003.}

\bibitem[{Bri}15]{Br}
Sonia {Brivio}.
\newblock {A note on theta divisors of stable bundles.}
\newblock {\em {Rev. Mat. Iberoam.}}, 31(2):601--608, 2015.

\bibitem[BS11]{BS11}
Andrea Bruno and Eduardo Sernesi.
\newblock A note on the {P}etri loci.
\newblock {\em Manuscripta Math.}, 136(3-4):439--443, 2011.

\bibitem[BV12]{BV}
Sonia Brivio and Alessandro Verra.
\newblock Pl\"ucker forms and the theta map.
\newblock {\em Amer. J. Math.}, 134(5):1247--1273, 2012.

\bibitem[CM99]{CM99}
Marc {Coppens} and Gerriet {Martens}.
\newblock {Linear series on a general $k$-gonal curve.}
\newblock {\em {Abh. Math. Semin. Univ. Hamb.}}, 69:347--371, 1999.

\bibitem[CM00]{CM00}
Marc {Coppens} and Gerriet {Martens}.
\newblock {Linear series on 4-gonal curves.}
\newblock {\em {Math. Nachr.}}, 213:35--55, 2000.

\bibitem[CM02]{CM02}
Marc {Coppens} and Gerriet {Martens}.
\newblock {On the varieties of special divisors.}
\newblock {\em {Indag. Math., New Ser.}}, 13(1):29--45, 2002.

\bibitem[CMTiB11]{CT}
Sebastian Casalaina-Martin and Montserrat Teixidor~i Bigas.
\newblock Singularities of {B}rill-{N}oether loci for vector bundles on a
  curve.
\newblock {\em Math. Nachr.}, 284(14-15):1846--1871, 2011.

\bibitem[CS92]{CS}
Ciro {Ciliberto} and Edoardo {Sernesi}.
\newblock {Singularities of the theta divisor and congruences of planes.}
\newblock {\em {J. Algebr. Geom.}}, 1(2):231--250, 1992.

\bibitem[{Die}49]{Dieu49}
Jean {Dieudonn\'e}.
\newblock {Sur une g\'en\'eralisation du groupe orthogonal \`a quatre
  variables.}
\newblock {\em {Arch. Math.}}, 1:282--287, 1949.

\bibitem[DN89]{DN}
Jean-Marc Drezet and Mudumbai~S. Narasimhan.
\newblock Groupe de {P}icard des vari\'et\'es de modules de fibr\'es
  semi-stables sur les courbes alg\'ebriques.
\newblock {\em Invent. Math.}, 97(1):53--94, 1989.

\bibitem[Eis88]{Eis88}
David Eisenbud.
\newblock Linear sections of determinantal varieties.
\newblock {\em Amer. J. Math.}, 110(3):541--575, 1988.

\bibitem[Far01]{Far01}
Gavril Farkas.
\newblock Brill-{N}oether loci and the gonality stratification of $m_g$.
\newblock {\em J. Reine Angew. Math.}, 539:185--200, 2001.

\bibitem[Far05]{Far05}
Gavril Farkas.
\newblock Gaussian maps, {G}ieseker-{P}etri loci and large
  theta-characteristics.
\newblock {\em J. Reine Angew. Math.}, 581:151--173, 2005.

\bibitem[Fro97]{Fro}
Georg~F. Frobenius.
\newblock {\"U}ber die {D}arstellung der endlichen {G}ruppe durch lineare
  {S}ubstitutionen.
\newblock {\em Sitzungsberichte der {K}\"oniglich {P}reussischen {A}kademie der
  {W}issenschaften zu {B}erlin}, pages 994--1015, 1897.

\bibitem[Gie82]{Gie82}
David Gieseker.
\newblock Stable curves and special divisors: {P}etri's conjecture.
\newblock {\em Invent. Math.}, 66(2):251--275, 1982.

\bibitem[{Gre}84]{G}
Mark~L. {Green}.
\newblock {Quadrics of rank four in the ideal of a canonical curve.}
\newblock {\em {Invent. Math.}}, 75:85--104, 1984.

\bibitem[GTiB09]{GTiB}
Ivona Grzegorczyk and Montserrat Teixidor~i Bigas.
\newblock Brill-{N}oether theory for stable vector bundles.
\newblock In {\em Moduli spaces and vector bundles}, volume 359 of {\em London
  Math. Soc. Lecture Note Ser.}, pages 29--50. Cambridge Univ. Press,
  Cambridge, 2009.

\bibitem[{Kem}73]{K73}
George~R. {Kempf}.
\newblock {On the geometry of a theorem of Riemann.}
\newblock {\em {Ann. Math. (2)}}, 98:178--185, 1973.

\bibitem[Kem83]{K1}
George~R. Kempf.
\newblock {\em Abelian integrals}, volume~13 of {\em Monograf\'\i as del
  Instituto de Matem\'aticas [Monographs of the Institute of Mathematics]}.
\newblock Universidad Nacional Aut\'onoma de M\'exico, M\'exico, 1983.

\bibitem[KS88]{KS}
George~R. Kempf and Frank-Olaf Schreyer.
\newblock A {T}orelli theorem for osculating cones to the theta divisor.
\newblock {\em Compositio Math.}, 67(3):343--353, 1988.

\bibitem[{Mar}46]{M46}
Arturo {Maroni}.
\newblock {Le serie lineari speciali sulle curve trigonali.}
\newblock {\em {Ann. Mat. Pura Appl. (4)}}, 25:343--354, 1946.

\bibitem[NR69]{NR69}
Mudumbai~S. Narasimhan and Sundararaman Ramanan.
\newblock Moduli of vector bundles on a compact {R}iemann surface.
\newblock {\em Ann. of Math. (2)}, 89:14--51, 1969.

\bibitem[{Par}02]{Pa02}
Seong-Suk {Park}.
\newblock {On the variety of special linear series on a general 5-gonal curve.}
\newblock {\em {Abh. Math. Semin. Univ. Hamb.}}, 72:283--291, 2002.

\bibitem[Pau03]{Pau}
Christian Pauly.
\newblock On cubics and quartics through a canonical curve.
\newblock {\em Ann. Sc. Norm. Super. Pisa Cl. Sci. (5)}, 2(4):803--822, 2003.

\bibitem[{Pau}10]{Pau10}
Christian {Pauly}.
\newblock {Rank four vector bundles without theta divisor over a curve of genus
  two.}
\newblock {\em {Adv. Geom.}}, 10(4):647--657, 2010.

\bibitem[{Pop}99]{Popa99}
Mihnea {Popa}.
\newblock {On the base locus of the generalized theta divisor.}
\newblock {\em {C. R. Acad. Sci., Paris, S\'er. I, Math.}}, 329(6):507--512,
  1999.

\bibitem[{Ray}82]{Ray82}
Michel {Raynaud}.
\newblock {Sections des fibres vectoriels sur une courbe.}
\newblock {\em {Bull. Soc. Math. Fr.}}, 110:103--125, 1982.

\bibitem[TiB88]{TiB88}
Montserrat Teixidor~i Bigas.
\newblock The divisor of curves with a vanishing theta-null.
\newblock {\em Compositio Math.}, 66(1):15--22, 1988.

\bibitem[TiB14]{TiB}
Montserrat Teixidor~i Bigas.
\newblock Injectivity of the {P}etri map for twisted {B}rill-{N}oether loci.
\newblock {\em Manuscripta Math.}, 145(3-4):389--397, 2014.

\bibitem[vGI01]{vGI}
Bert van Geemen and Elham Izadi.
\newblock The tangent space to the moduli space of vector bundles on a curve
  and the singular locus of the theta divisor of the {J}acobian.
\newblock {\em J. Algebraic Geom.}, 10(1):133--177, 2001.

\bibitem[{Wat}87]{W87}
William~C. {Waterhouse}.
\newblock {Automorphisms of $\det (X\sb{ij}):$ The group scheme approach.}
\newblock {\em {Adv. Math.}}, 65:171--203, 1987.

\end{thebibliography}
\end{document}